\newcommand{\rw}[1]{{\color{black} #1}}
\newcommand{\ra}[1]{{\color{black} #1}}
\newcommand{\E}{\mathbb{E}} 
\newcommand{\R}{\mathbb{R}} 
\newcommand{\K}{\mathbb{K}} 
\newcommand{\N}{\mathbb{N}}
\newcommand{\Lie}{\mathcal{L}}
\newcommand{\A}{\mathcal{A}}
\newcommand{\psd}{\mathbb{S}}
\newcommand{\cs}{\mathcal{C}}
\DeclarePairedDelimiter{\norm}{\lVert}{\rVert}
\DeclarePairedDelimiterX{\inp}[2]{\langle}{\rangle}{#1, #2}
\DeclarePairedDelimiter{\Mp}{\mathcal{M}_+(}{)}
\newcommand{\prb}{\textrm{Prob}}
\newcommand{\bbmu}{\boldsymbol{\mu}}
\newcommand{\bell}{\boldsymbol{\ell}}
\newtheorem{thm}{Theorem}[section]
\newtheorem{lem}[thm]{Lemma}
\newtheorem{prop}[thm]{Proposition}
\newtheorem{cor}{Corollary}
\newtheorem{defn}{Definition}[section]
\newtheorem{rem}{Remark}
\renewcommand\footnotemark{}
\newlength{\exfiglength}
\title{\textbf{Unsafe Probabilities and Risk Contours for Stochastic Processes using Convex Optimization
}}
\author{Jared Miller$^1$, Matteo Tacchi$^2$, Didier Henrion$^3$, Mario Sznaier$^4$
\thanks{$^1$ J. Miller is with the Automatic Control Laboratory (IfA), Department of Information Technology and Electrical Engineering (D-ITET), ETH Z\"{u}rich, Physikstrasse 3, 8092, Z\"{u}rich, Switzerland (e-mail: jarmiller@control.ee.ethz.ch).}
\thanks{$^2$ M. Tacchi is with Univ. Grenoble Alpes, CNRS, Grenoble INP (Institute of Engineering Univ. Grenoble Alpes), GIPSA-lab, 38000 Grenoble, France. (e-mail: matteo.tacchi@gipsa-lab.fr)}
\thanks{$^3$ D. Henrion is with the Polynomial Optimization group of LAAS-CNRS, Universit\'e de Toulouse, CNRS, Toulouse, France; and the 
Faculty of Electrical Engineering, Czech Technical University in Prague, Czech Republic. (henrion@laas.fr)}
\thanks{$^4$ M. Sznaier is with the Robust Systems Lab,  ECE Department, Northeastern University, Boston, MA 02115. (e-mail: msznaier@coe.neu.edu).}
\thanks{J. Miller and M. Sznaier were partially supported by NSF grants  CNS--1646121, ECCS--1808381 and CNS--2038493, AFOSR grant FA9550-19-1-0005, and ONR grant N00014-21-1-2431.  J. Miller was partially supported by the Swiss National Science Foundation under NCCR Automation, grant agreement 51NF40\_180545.
}}
\begin{document}

\maketitle


\begin{abstract}
\label{sec:abstract}
\rw{When evaluating safety specifications for trajectories of a dynamical system, it is vital to be able to bound the worst-case probability of unsafety (constraint violation).}
\rw{Certifications of stochastic safety and worst-case probabilities of unsafety can be expressed as infinite-dimensional linear programs (e.g. stochastic barrier functions, occupation measure problems). }
\rw{This paper proves that the infinite-dimensional linear programs and their finite-dimensional Moment-Sum-of-Squares truncations} are nonconservative (to the true probability of unsafety) under compactness and regularity conditions in \rw{stochastic} dynamics. 
Unsafe-probability \rw{estimates} and risk contours are generated for example stochastic processes. 




\end{abstract}

\section{Introduction}
\label{sec:introduction}


\rw{A trajectory of a dynamical system is considered safe if it never contacts a hazardous set $X_u$ over the course of its execution starting from an initial set $X_0$. Instances of safety specifications for a deterministic system could include an aircraft not crashing into the ground, an autonomous vehicle not hitting a pedestrian, or the temperature in a room not violating a comfort constraint. 
In a stochastic setting, such declarations of safety can be formulated as a safety specification holding \ra{almost surely} \cite{CLARK2021stochcbf}. However, \ra{almost sure} guarantees of safety can be either overly conservative or impossible to meet (in the case of ergodic processes such as non-vanishing \ra{G}aussian noise). Safety specifications can therefore be certified according to $(1-p)$-safety, in which the \ra{hazardous} set $X_u$ is contacted with worst-case probability $p$ starting from $X_0$.
Prior work on certifying $(1-p)$ safety for fixed $p$ using (control) barrier functions include \cite{prajna2004stochastic, salamati2021data, SANTOYO2021109439}. These methods provide sufficient conditions for safety by exploiting a supermartingale probability inequality (Lemma 1 in \cite{Kushner67}), but they do not provide conditions for existence, nonconservatism, nor convergence.
Other stochastic verification methods include reachability \cite{digailova2004reachability, prandini2006stochastic}, reach-avoid analysis \cite{MOHAJERINESFAHANI201643, bansal2017hamilton}, ruin \cite{asmussen2010ruin}, and compositional abstraction \cite{lavaei2020compositional, nejati2022compositional}.

The focus of the present paper is to exactly compute the worst-case probability of unsafety (reaching $X_u$), starting from $X_0$, and to provide non-conservative conditions under which this worst-case unsafe probability can be computed without conservatism using convex optimization methods. The worst-case unsafety problem is generally a nonconvex but finite-dimensional optimization problem. Using established methods of occupation-measure-based lifting from optimal control \cite{rubio1975generalized, lewis1980relaxation}, the nonconvex unsafety problem will be lifted into an infinite-dimensional \ac{LP} in occupation measures.
The main contribution of this work is to present \ac{LP} formulations and conditions for which:
\begin{enumerate}
    \item The convex \ac{LP} objective will equal the worst-case probability of unsafety.
    \item A risk contour (continuous function with arguments of initial time and initial condition) derived from  \iac{LP} will converge from above in an $L_1$ sense to the true worst-case probability achieved.
\end{enumerate}}

The \rw{developed infinite-dimensional \acp{LP} in this paper } must be truncated into finite-dimensional programs in order to obtain numerical solutions. Algorithms to perform this truncation for generic stochastic processes include gridding \cite{cho2002linear}, radial basis function selection \cite{kariotoglou2017linear},  random sampling \cite{huynh2012incremental}, \ra{and neural network parameterization with counterexample-guided verification \cite{abate2018counterexample, abate2021fossil}}.
In the specific case where the stochastic process and all problem data have a rational representation, the moment-\ac{SOS} hierarchy \cite{lasserre2009moments} can be employed to truncate the \ac{LP} into a hierarchy of \acp{SDP} in increasing size, parameterized by the polynomial degree. Application of the moment-\ac{SOS} hierarchy for stochastic analysis and control includes regional verification \cite{steinhardt2012finite}, reach-avoid estimation \cite{xue2023reach},
exit-time estimation \cite{henrion2021moment}, and (conditional) value-at-risk upper-bounding \cite{miller2023chancepeak}.

 The two most relevant prior works to our paper are \cite{sloth2015safety} and \cite{jasour2019risk}. The work in \cite{sloth2015safety} presents infinite-dimensional \acp{LP} to generate superlevel-based outer approximations of probability-\rw{$(1-p)$}-safe sets of a \ac{SDE} for a fixed initial distribution $\mu_0$ \rw{and probability level $p$}. 
Our problem evaluates safety of a specific initial set $X_0$ with respect to a free initial distribution, performs risk contour analysis, and extends the framework towards more general stochastic processes. The arguments that we use in posing this \ac{LP} are similar \ra{to} steps taken in Lemma 2 of the previously presented work \cite{sloth2015safety}. The work in \cite{jasour2019risk} synthesizes risk contours for optimization problems with distributional uncertainty (with an emphasis on obstacles and path planning), and our paper extends this work by analyzing safety of stochastic dynamical systems.





This paper has the following structure: 
Section \ref{sec:preliminaries} introduces preliminaries such as notation, stochastic processes, and \ac{SOS} proofs of nonnegativity. Section \ref{sec:unsafe_lp} poses infinite-dimensional convex \acp{LP} to compute the worst-case probability of unsafety of a stochastic process w.r.t. an initial set $X_0$ \rw{or an initial distribution $\mu_0$.}
Section \ref{sec:unsafe_sos} truncates the unsafety \acp{LP} into a converging sequence of  finite-dimensional \acp{SDP} using the moment-\ac{SOS} hierarchy.
Section \ref{sec:examples} demonstrates the computation of unsafe probabilities and risk contours on example stochastic process with polynomial dynamics.
Section \ref{sec:conclusion} concludes the paper. 

\section{Preliminaries}
\label{sec:preliminaries}

\begin{acronym}[WSOS]
\acro{BSA}{Basic Semialgebraic}



\acro{LP}{Linear Program}
\acroindefinite{LP}{an}{a}




\acro{ODE}{Ordinary Differential Equation}
\acroindefinite{ODE}{an}{a}

\acro{PSD}{Positive Semidefinite}



\acro{SDE}{Stochastic Differential Equation}
\acroindefinite{SDE}{an}{a}

\acro{SDP}{Semidefinite Program}
\acroindefinite{SDP}{an}{a}

\acro{SOS}{Sum of Squares}
\acroindefinite{SOS}{an}{a}

\acro{WSOS}{Weighted Sum of Squares}

\end{acronym}

\subsection{Notation}

The subset of natural numbers between $a$ and $b$ is $a..b \subset \N$.
The minimum of two quantities will be denoted as $a \wedge b=\min(a,b)$. The set of polynomials with real-valued coefficients in an indeterminate $x \in \R^n$ is $\R[x]$. Every polynomial $c \in \R[x]$ may be uniquely described by a finite sum over multi-indices $\alpha \in \N^n$ by $c(x) = \sum_{\alpha} c_\alpha x^\alpha$. The degree of a polynomial $\deg{c}$ is equal to the largest exponent-sum $\sum_{i=1}^n \alpha_i$ such that $c_\alpha \neq 0$. The set of polynomials in $x$ with degree at most $2k$ is $\R[x]_{\leq 2k}$.

\subsection{Analysis and Measure Theory}
The set of continuous functions over a set $S$ is $C(S)$. Its subcone of nonnegative continuous functions is $C_+(S)$. Given a product set $S \times H$, the set of functions that are once-continuously differentiable in the first variable $s$ and are twice-continuously differentiable in the second variable $h$ is $C^{1, 2}(S \times H)$. The set of signed Borel measures supported in $S$ is $\mathcal{M}(S)$, and its subset of nonnegative Borel measures supported in $S$ is $\Mp{S}$. 
The support of a measure $\mu \in \Mp{\R^n}$ is the smallest closed set of points $s$ such that every open neighborhood $N_\epsilon(s)$ has positive measure $\mu(N_\epsilon(s)) > 0$.
The sets $C_+(S)$ and $\Mp{S}$ are in topological duality when $S$ is compact, and they admit a duality pairing $\inp{\cdot}{\cdot}$ by Lebesgue integration: $\forall f \in C_+(S), \ \mu \in \Mp{S}: \inp{f}{\mu} = \int_S f(s) d \mu(s)$. This duality pairing will be extended into a bilinear pairing between $C(S)$ and $\mathcal{M}_+(S)$. The $\mu$-measure of a set $A \subseteq S$ may also be expressed as the pairing of $\mu$ with the indicator function $I_A: \ \mu(A) = \inp{I_A}{\mu} = \inf \{\inp{w}{\mu} \; | \; w \in C(S), w \geq I_A\}$. The mass of $\mu$ is $\mu(S) = \inp{1}{\mu}$, and $\mu$ is a probability distribution if this mass is 1. A vital example of a probability distribution is the Dirac delta $\delta_{s'}$ supported only at $s'$, obeying the point-evaluation pairing rule $\forall f \in C(S): \ \inp{f}{\delta_{s'}} = f(s').$ 
\rw{When $\mu$ is a probability distribution, the probability of the event $x \in A$ is $\text{Prob}_\mu[x \in A] = \mu(A)$.}
The product measure between $\mu \in \Mp{S_1}$ and $\nu \in \Mp{S_2}$ is the unique measure $\mu \otimes \nu$ satisfying $\forall A_1 \times A_2 \subseteq S_1 \times S_2: \ \mu \otimes \nu(A_1 \times A_2) = \mu(A_1)\nu(A_2).$
\vspace{-0.2cm}

\subsection{Stochastic Processes}
\vspace{-0.2cm}
A stochastic process is a time-indexed set of random variables $\{\rw{\mu_t}\}$ that are related together through system dynamics \cite{stroock1997multidimensional} (pushforward of an initial distribution along flow maps). Properties of the stochastic process will be analyzed in terms of test functions in an appropriate set $\cs$ (such as $\cs = C([\rw{t_0}, T] \times X)$.
The expectation of a test function $v(s, x)$ at the time $t$ according to the distribution $\rw{\mu_t}$ is $\E[v(t,x) \mid \rw{\mu_t}]$. Letting ${\Delta t} > 0$ be a time step, the \textbf{generator} $\Lie_{\rw{\Delta t}}$ of a stochastic process satisfies (for all appropriate test functions $v \in \textrm{dom}{\Lie_{\rw{\Delta t}}}$)
\begin{equation*}
    \Lie_{{\Delta t}} v(t, x) = \lim_{{{\Delta t}}' \rightarrow {{\Delta t}}} \frac{\E[v(t+{{\Delta t}}',x) \mid \rw{\mu}_{t+{{\Delta t}}'}] - v(t,x) }{{{\Delta t}}'}. 
\end{equation*}
We will express the domain of $\Lie_{\rw{\Delta t}}$ as $\cs  = \textrm{dom}{\Lie_{\rw{\Delta t}}}$, such that $\cs$ is a subset of the preimage of  continuous functions under $\Lie_{\rw{\Delta t}}$.
The generator for a discrete-time stochastic process is $\Lie_{{{\Delta t}}}$ with ${{\Delta t}} > 0$, which is defined w.r.t. the test function class $\cs = C([\rw{t_0}, T] \times X)$. For a discrete-time law following $x_{t+{{\Delta t}}} = f(t, \rw{x_t}, \lambda_t)$ in which the time-varying parameter $\lambda_t \in \Lambda$ has a probability distribution of $\xi(\lambda_t)$, the associated generator satisfies
\begin{align*}
    \Lie_{{\Delta t}} v(t, x) = \rw{\frac{1}{\Delta t}} \int_{\rw{\lambda \in }\Lambda} v\left(t+{{\Delta t}}, f(t, x, \lambda)\right) -v(t, x) \ d \xi(\lambda).
\end{align*}
The generator for a continuous-time (Feller) stochastic process is $\Lie_{{{\Delta t}}=0}$ for the class $\cs = C^{1, 2}([\rw{t_0}, T] \times X)$ \cite{rogers2000diffusions}. 
The dynamical behavior of an It\^{o} \ac{SDE} \rw{with a drift $f$ and a diffusion $g$} has a description
\begin{align}
    & dx = f(t, x) dt + g(t, x) dW, \label{eq:sde}\\
    \intertext{in which $W$ is the Wiener process. The generator of the \ac{SDE} in \eqref{eq:sde} is}
        \Lie_0 v(t, x) =& \partial_t v(t, x) + f(t, x) \cdot \nabla_x v(t,x) \nonumber\\
        &+ g(t,x)^T\nabla^2_{xx}v(t,x) g(t,x)/2. \label{eq:lie}
\end{align}
A sequence of random variables $\{Y_t\}$ is a \textit{martingale} if $\E[Y_{t+\delta t} \mid \{Y_{t'}\}_{t' \leq t}] = Y_t$ \cite{stroock1997multidimensional} (in the sense of generalized conditional expectation).
The generator $\Lie_{\Delta t}$ solves a martingale problem for all possible time steps $s > 0$ and test functions $v \in \cs$ \cite{williams1991probability}. 
The martingale problem is
\begin{align}
    0 = \E[v(t+s, x) \mid \rw{\mu_{t + s}} ] &- \E[v(t, x) \mid \rw{\mu_{t}} ]\label{eq:martingale}\\
    &\textstyle \quad - \sum_{s'=t}^{t+s} \E[\Lie_{{\Delta t}} v(t, x) \mid \rw{\mu_{s'}}],  \nonumber
\end{align}
\ra{for which the continuous-time problem involves $\Delta t = 0$.}
\rw{The expression in \eqref{eq:martingale} is known as Dynkin's formula when $\Lie_0$ describes an \ac{SDE} (\eqref{eq:sde}), or Liouville's formula when $\Lie_0$ models an \ac{ODE} (\ac{SDE} with $g=0$ everywhere).}
\rw{We will denote the choice of the discrete-time generator $\Lie_{\rw{\Delta t}}$ or continuous-time generator $\Lie_0$ as the symbol $\Lie$}, and will refer to the generator $\Lie$ interchangeably with its stochastic process.


\subsection{Occupation Measures}
The martingale problem can be described using the theory of occupation measures. Given an initial condition (point) $x_0 \in X_0 \subseteq X$ and a time $t \in [\rw{t_0}, T]$, we refer to $x(t \mid x_0)$ as the trajectory of $\{\rw{\mu_t}\}$ with generator $\Lie$ starting at the initial distribution $\delta_{x = x_0}$.
The \textbf{occupation measure} $\mu \in \Mp{[\rw{t_0}, T] \times X}$ and \textbf{terminal measure} $\mu_\tau \in \Mp{[\rw{t_0}, T] \times X}$ of the stochastic process $\{\rw{\mu_t}\}$ from a starting time $t_0$ up to a stopping time $t^*$ satisfies $\forall A \subseteq [t_\rw{t_0}, T], B \subseteq X$:
\begin{subequations}
\label{eq:measures_prob}
\begin{align}    
    \mu(A \times B) &= \textstyle \int_{X_0} \int_{t=t_0}^{t^*}  I_{A \times B}\left(t, x(t \mid x_0)\right) dt \, d\mu_0(x_0) \nonumber \\        
    \mu_\tau(A \times B) &= \textstyle \int_{X_0} I_{A \times B}\left(t^*, x(t^* \mid x_0)\right) \, d\mu_0(x_0).
\end{align}
\end{subequations}
The measures in \eqref{eq:measures_prob} may also be defined with respect to a probability distribution of stopping times $t^* \in \Mp{[\rw{t_0}, T]}$.
A tuple of measures $(\mu_0, \rw{ \mu_\tau, \mu}) \in \Mp{X_0} \times \Mp{[\rw{t_0}, T] \times X}^2$ obeys the martingale relations \eqref{eq:martingale}  of the stochastic process with generator $\Lie$ 
\begin{align}
    \forall v \in \cs: & & \inp{v}{\mu_\tau} &= \inp{v(t_0, \cdot)}{\mu_0} + \inp{\Lie v}{\mu}. \label{eq:martingale_occ_v}
\end{align}
Relation \eqref{eq:martingale_occ_v} is called Liouville equation for \acp{ODE} and is referred to as Dynkin's equation in the context of \acp{SDE}. We will also equivalently express \eqref{eq:martingale_occ_v} \rw{with an implicit $(\forall v \in \mathcal{C})$ quantification} in a shorthand form  by 
\begin{align}
    \mu_\tau = \delta_{t_0} \otimes \mu_0 + \Lie^\dagger \mu. \label{eq:martingale_occ}
\end{align}
\rw{Tuple of measures $(\mu_0, \mu_\tau, \mu)$ satisfying \eqref{eq:martingale_occ} are \textbf{relaxed occupation measures}. Not every relaxed occupation measure is a true occupation measure, Theorem 3.1 of \cite{cho2002linear} outlines conditions for which the relaxed occupation measure is supported on the graphs of stochastic trajectories.}







\section{Unsafety Problem and Linear Programs}
\label{sec:unsafe_lp}

This section will present \acp{LP} to compute worst-case probabilities of \ra{unsafety} of \ra{$\Lie$-generated} stochastic processes.
\vspace{-0.25cm}
\subsection{Problem Statement}

\rw{The focus of this paper is to perform risk analysis} of stochastic processes by computing the probability that trajectories will reach an unsafe set $X_u$. Trajectories evolve over a maximal time horizon of $T$ in a state space $X \subseteq \R^n$, beginning in an initial set $X_0 \subseteq X$ and possibly entering the unsafe set $X_u \subseteq X$. We consider stochastic processes described by a generator $\Lie$ (e.g., \iac{SDE}), for which $\rw{\mu_t}$ is the time-dependent state probability distribution of the process starting from $X_0$. The \ra{hitting} time from $X$ and $X_u$ respectively are $\tau_X = \inf\{ t : [x \in \partial X \mid \rw{\mu_t}]\}$ and $\tau_u = \inf\{ t : [x \in  X_u \mid \rw{\mu_t}]\}$, together forming the stopping time $\tau_m = \min(\rw{\tau_u}, \tau_X, \rw{T})$.
The worst-case probability of unsafety for $\Lie$-trajectories starting at a point $x_0 \in X$ within a time horizon of $[t_0, T]$ is:
\begin{subequations}
\label{eq:unsafe_prob_singlept}
\begin{align}
    P^*(t_0, x_0) = &  \prb_{\rw{\mu}_{\rw{T}}}[x \in X_u] \label{eq:unsafe_prob_objective} \\
        \ra{\text{s.t.} \qquad } & x(0) = x_0 \\
     & x(t) \ \text{follows} \  \Lie \quad  \forall t \in [t_0, \tau_m]. 
\label{eq:unsafe_prob_sde}
\end{align}
\end{subequations}
\rw{Trajectories following the stopping time $\ra{\tau_m}$ will halt upon reaching the unsafe set $X_u$, the boundary of the set $\partial X$, or when $T$ time units are elapsed (for whichever event comes first).}
The stopping time $\ra{\tau_m}$ in \eqref{eq:unsafe_prob_sde} also ensures that trajectories $x(t)$ remain in $X$ for all relevant $t$.
\rw{
The worst-case probability of unsafety for $\Lie$-trajectories over an initial set $X_0 \subseteq X$ is
\begin{subequations}
\label{eq:unsafe_prob_single}
\begin{align}
    P^*(t_0, X_0) = &\sup_{x_0 \in X_0} P^*(t_0, x_0), \label{eq:unsafe_prob_objective_set}\\
     \intertext{and the averaged worst-case probability of unsafety when starting at a probability distribution of initial conditions $\mu_0 \in \Mp{X_0}$ is}
    P^*(t_0, \mu_0) = &\textstyle \int_{X_0} P^*(t_0, x_0) d \mu_0(x_0). \label{eq:unsafe_prob_objective_avg}
\end{align}
\end{subequations}
For a single initial condition $x_0 \in X$, the single and averaged probabilities of unsafety $P(t_0, x_0)$ and $P(t_0, \delta_{x_0})$ are equal}
The initial set of $X_0$ is safe if $P^*(t_0, X_0)=0$ and is unsafe if $P^*(t_0, X_0)=1$. \rw{Similarly, the distribution of initial conditions $\mu_0$ is safe if $P^*(t_0, \mu_0) = 0$ and is unsafe if $P^*(t_0, \mu_0) = 1$.} Any other value $P^*(t_0, X_0), \rw{P^*(t_0, \mu_0)} \in (0, 1)$ returns a maximal probability of unsafety of the \rw{stochastic process}. \rw{The goals of this paper are to form convergent convex optimization problem whose objectives are equal to \eqref{eq:unsafe_prob_objective_set} and \eqref{eq:unsafe_prob_objective_avg}, and to find a continuous risk contour $v \in \mathcal{C}$ that will converge from above in an $L_1$ sense to the optimal map $P^*(t, x_0)$.  Level sets of any such risk function $v$ can therefore be used to upper-bound the risk of stochastic execution for $\Lie$ when starting at any $(t, x_0) \in [t_0, T]\times X$. }



\subsection{Assumptions}

We will posit the following assumptions throughout this paper \rw{in order to obtain nonconservative conditions for unsafety analysis via convex programming}:
\begin{enumerate}
    \item[A1] The state-sets $X_0, X_u, X$ are all compact.
    \item[A2] The time interval $[t_0, T]$ is compact.
    \item[A3] Trajectories stop upon the first exit from $X$ ($\tau_X \wedge T$).
    \item[A4] The test function set $\cs = \textrm{dom}(\Lie)$ satisfies $\cs \subseteq C([t_0, T] \times X)$
    with $1 \in \cs$ and $\Lie 1 = 0$.
    \item[A5]$\cs$ separates points and is multiplicatively closed.
    \item[A6] There exists a countable set $\{v_k\} \in \cs$ such that $\forall v \in \cs:$ $(v, \Lie v)$ is contained in the bounded pointwise closure of the linear span of $\{(v_k, \Lie v_k)\}$. 
\end{enumerate}

\rw{Assumptions A1-A2 enforce compactness. The stopping Assumption A3 ensures that trajectories never exit the space $X$ and then return to it later.}
Assumptions A4-A6 are the same as in Condition 1 of \cite{cho2002linear}. \rw{Assumptions A4-A6 are very general, as they hold for the generators of Markov, \ac{SDE}, and L\'{e}vy processes.}


\subsection{Measure Program}

We will pose a convex but infinite-dimensional \ac{LP} in order to upper-bound program \eqref{eq:unsafe_prob_single}. Such an \ac{LP} will be posed in terms of \rw{the following variables}:
\rw{
\begin{subequations}
\label{eq:meas_vars}
    \begin{align}
        \mu_0 &\in \Mp{X_0} & & \text{Initial} \\
        \mu &\in \Mp{[t_0, T] \times X} & & \text{Occupation} \\
        \mu_c &\in \Mp{[t_0, T] \times X} & & \text{Complement} \\
        \mu_u &\in \Mp{[t_0, T] \times X_u} & & \text{Unsafe}
    \end{align}
\end{subequations}
The sum $\mu_u + \mu_c$ will act as a terminal measure with free terminal time. These measure variables will be designed such that $(\mu_0, \mu_u + \mu_c, \mu)$ is a relaxed occupation measure to the stochastic process. The initial distribution $\mu_0$ is an optimization variable when solving the $X_0$ problem in \eqref{eq:unsafe_prob_objective_set}, whereas it is fixed to a given initial distribution when solving the averaged unsafety problem in \eqref{eq:unsafe_prob_objective_avg}. 
}

\rw{The core contribution of  our paper is the following statement of no-relaxation-gap:}
\begin{thm} \label{thm:no_relaxation}
    The following \ac{LP} will upper-bound the value at \eqref{eq:unsafe_prob_objective_set} as $p^*(t_0, X_0) \geq P^*(t_0, X_0)$ under assumption A3.
    \begin{subequations}
\label{eq:unsafe_meas}
\begin{align}
    p^*({t_0}, X_0) = &\sup \quad \inp{1}{\mu_{\rw{u}}}  \label{eq:unsafe_meas_obj}\\
    \ra{\text{s.t.} \qquad }& \mu_{\rw{u}} + \mu_c =  \delta_{t_0} \otimes \mu_0 + \Lie^\dagger \mu \label{eq:unsafe_meas_liou} \\
    & \inp{1}{\mu_0} = 1 \label{eq:unsafe_meas_prob}\\
    & \mu_0 \in \Mp{X_0} \\ 
    & \mu, \ \mu_c \in \Mp{[t_0, T] \times X} \label{eq:unsafe_meas_support_pre}\\ 
    & \mu_{\rw{u}} \in \Mp{[t_0, T] \times X_u}.    \label{eq:unsafe_meas_support}
\end{align}
    Under Assumptions A1-A6, the objectives of \eqref{eq:unsafe_meas} and \eqref{eq:unsafe_prob_sde} are equal $(p^*({t_0}, X_0) = P^*({t_0}, X_0))$.
\end{subequations}
\end{thm}


\begin{proof}
See Appendix \ref{app:no_relaxation_gap}.
\end{proof}

    

\begin{cor}
\rw{
    In the case where $\mu_0$ is prespecified, the following \ac{LP} with objective $p^*(t_0, \mu_0)$ will upper-bound $P^*(t_0, \mu_0)$ from \eqref{eq:unsafe_prob_objective_avg} under A3, and will have no relaxation gap with $p^*(t_0, \mu_0) = P^*(t_0, \mu_0)$ under A1-A6:
        \begin{subequations}
\label{eq:unsafe_meas_given}
\begin{align}
    p^*({t_0}, \mu_0) = &\sup \quad \inp{1}{\mu_{\rw{u}}}  \label{eq:unsafe_meas_given_obj}\\
   \ra{\text{s.t.} \qquad } & \mu_{\rw{u}} + \mu_c =  \delta_{t_0} \otimes \mu_0 + \Lie^\dagger \mu \label{eq:unsafe_meas_given_liou} \\
    & \mu, \ \mu_c \in \Mp{[t_0, T] \times X} \label{eq:unsafe_meas_given_support_pre}\\ 
    & \mu_{\rw{u}} \in \Mp{[t_0, T] \times X_u}.    \label{eq:unsafe_meas_given_support}
\end{align}
\end{subequations}
}
\end{cor}
\vspace{-1cm}
\begin{proof}
    \rw{The upper-bound and no-relaxation-gap \ra{results follow  from retracing} the steps \ra{used} in Appendix \ref{app:no_relaxation_gap} to prove Theorem \ref{thm:no_relaxation}. For the upper-bound, let $(\mu[x_0], \mu_c[x_0], \mu_u[x_0])$ denote measures obtained by the construction procedure from Appendix \ref{app:upper_bound_construction} given an initial point $x_0$ and a stopping time $t^*$. Feasible measures  $(\mu, \mu_c, \mu_u)$ for\eqref{eq:unsafe_meas_given_liou}-\eqref{eq:unsafe_meas_given_liou} can therefore be gathered by averaging $(\mu[x_0], \mu_c[x_0], \mu_u[x_0])$ over all $x_0 \in \mu_0$, in which the cost is preserved. For the no-relaxation gap, the tuple $(\mu_0, \mu_{\rw{u}} + \mu_c, \mu)$ remains supported on the graph of stochastic trajectories, and thus the proof steps of Theorem \ref{thm:no_relaxation} can be used without modification.}
    
\end{proof}


\subsection{Function Program}

\rw{This subsection will define \acp{LP} in a continuous function variable $v \in \mathcal{C}$ that are dual to \eqref{eq:unsafe_meas} and \eqref{eq:unsafe_meas_given}. We first present these statements of duality. We then will analyze properties of this solution $v$ (noting that it will upper-bound $P^*(t_0, x_0)$ in a pointwise sense) and outline convergence properties. This solution $v$ will also be called a `risk contour' due to this pointwise satisfaction. These risk contours will be members of a restricted class of functions:
\begin{align}
    \mathcal{R}=&  \begin{Bmatrix}
         & \Lie v(t,x) \leq 0 & \forall (t, x) \in [t_0, T] \times X \\ v \in \mathcal{C}\mid  & v(t, x) \geq 0 &  \forall (t, x) \in [t_0, T] \times X \\  & v(t, x) \geq 1 & \forall (t, x) \in [t_0, T] \times X_u
    \end{Bmatrix}. \label{eq:up_prob}
\end{align}
}
\rw{The set $\mathcal{R}$ depends on the sets $\mathcal{C}, [t_0, T], X$, and $X_u$.}

\begin{thm}
\label{thm:unsafe_cont}
\Iac{LP} in terms of an auxiliary function $v$ that forms a weak dual ($p^*({t_0}, X_0) \geq d^*({t_0}, X_0)$) to \eqref{eq:unsafe_meas} is
\begin{subequations}
\label{eq:unsafe_cont}
\begin{align}
    d^*({t_0}, X_0)    =& \inf_{\gamma \in \R, \ v \in \rw{\mathcal{R}}} \quad \gamma \label{eq:unsafe_cont_obj} \\
   \ra{\text{s.t.} \qquad } & \gamma \geq v(t_0, x) & & \forall x \in X_0. \label{eq:unsafe_cont_init}
\end{align}
\end{subequations}
Strong duality ($p^*(t_0, X_0)=d^*(t_0, X_0)$) will hold under Assumptions A1-A4.
\end{thm}
\begin{proof}
    See Appendix \ref{app:duality}.
\end{proof}

\begin{rem}
    Constraints \eqref{eq:unsafe_cont_init}-\eqref{eq:unsafe_cont_init} have the same form as a stochastic (time-dependent) barrier function at a fixed probability level $\gamma$  (from \cite{prajna2004stochastic}). Our work involves application of $\gamma$ as an optimization variable, as well as a proof of no relaxation gap in Theorem \ref{thm:no_relaxation} under A1-A6.
\end{rem}

\begin{cor}
The following \ac{LP} is weakly dual to \eqref{eq:unsafe_meas_given}, and possesses strong duality under A1-A4:
    \begin{subequations}
\label{eq:risk_cont}
\begin{align}
    d^*(t_0, \mu_0)    =&\inf_{v \in \rw{\mathcal{R}}} \textstyle\int_{X_0} v(t_0, x) d(x_0) \! \! \! \! \! \! \! \! \! \! \! \! \! \! \! \! \! \! \! \! \! \label{eq:risk_cont_obj} \\
  \ra{\text{s.t.} \qquad }  & \Lie v(t,x) \leq 0 & & \forall (t, x) \in [t_0, T] \times X. \label{eq:risk_cont_lie}
\end{align}
\end{subequations}
\end{cor}
\begin{proof}
    This strong duality holds by slight modification of the method used in Theorem \ref{thm:unsafe_cont} and Appendix \ref{app:duality}.
\end{proof}

\rw{We now outline properties of the risk contour $v$ functions found by solving problems \eqref{eq:unsafe_cont} and \eqref{eq:risk_cont}. The first property of $v$ to analyze is its boundedness and pointwise-upper-bounding of $P^*$}.



\begin{thm}
\label{thm:sublevel_1}
    Under \rw{A1-A6, every $v \in \mathcal{R}$ satisfies}
   $v(t_0, x_0) \geq P^*(t_0, x_0)$ for every initial condition $x_0 \in X$.
\end{thm}
\begin{proof}
See Appendix \ref{app:superlevel_risk}.
\end{proof}

\begin{prop}
\rw{Both $d^*(t_0, X_0)$ and $d^*(t_0, \mu_0)$ are upper-bounded by 1.}
\label{prop:upper_bound}
\end{prop}
\begin{proof}
    The function $v(t, x) = 1$ is a member of $\mathcal{R}$. The point $(v = 1, \gamma = 1)$ is feasible for \eqref{eq:unsafe_meas} with value $\gamma = 1$, and $(v = 1)$ is feasible for \eqref{eq:unsafe_meas_given} with value $\inp{1}{\mu_0} = 1$. 
\end{proof}

    Level sets of any $v \in \mathcal{R}$ can therefore be used to upper-bound the risk of stochastic execution for $\Lie$ when starting at any $(t, x_0) \in [t_0, T]\times X$. \rw{Solutions $v$ to \eqref{eq:risk_cont} will be maximally close in an $L_1(\mu_0)$ sense to the true unsafe probability map based on the following theorem:}




\begin{thm}
\label{thm:converge_sequence_init}
    Let $\{v_k\}_{k \geq 1}$ be a sequence of solutions \rw{such that each $v_k \in \mathcal{R}$} with $\int_{X_0} v_k(t_0, x_0)d\mu_0(x_0) \rightarrow d^*(t_0, \mu_0)$ (under A1-A6). Then $v_k$ converges in $L_1(\mu_0)$ to $\rw{P}^*$ as in 
    \begin{align}
        \int_X \left(\vphantom{\sum} v_k(t_0, x_0) - \rw{P}^*(t_0, x_0)\right) d \mu_0(x_0) \underset{k\to\infty}{\longrightarrow} 0. \label{eq:l1_converge}
    \end{align}
\end{thm}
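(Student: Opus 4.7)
The plan is to combine three ingredients that are already available: (i) the pointwise upper bound on $P^*$ from Theorem \ref{thm:sublevel_1}, (ii) the no-relaxation-gap identification $p^*(t_0, x_0) = P^*(t_0, x_0)$ pointwise, which comes from applying Theorem \ref{thm:no_relaxation} with $X_0 = \{x_0\}$, and (iii) the strong duality $m^*(t_0, \mu_0) = J^*(t_0, \mu_0)$ together with the fact $m^*(t_0, \mu_0) = M^*(t_0, \mu_0) = \int_X P^*(t_0, x_0) d\mu_0(x_0)$ from Corollary \ref{thm:unsafe_risk}. The point is that $L_1$-convergence of a sequence dominating a fixed function reduces to convergence of integrals, so no measure-theoretic heavy lifting is needed.

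First I would observe that each $v_k$ is feasible for \eqref{eq:risk_cont_lie}--\eqref{eq:risk_cont_v}, so Theorem \ref{thm:sublevel_1} gives the pointwise bound $v_k(t_0, x_0) \geq P^*(t_0, x_0)$ for every $x_0 \in X$. Applying Theorem \ref{thm:no_relaxation} with the singleton initial set $X_0 = \{x_0\}$ yields $P^*(t_0, x_0) = p^*(t_0, x_0)$, so the integrand
\[
v_k(t_0, x_0) - p^*(t_0, x_0) \geq 0 \quad \forall x_0 \in X.
\]
This nonnegativity lets us drop absolute values in the $L_1$ norm.

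Next I would rewrite the integral as a difference of two scalar quantities. By linearity,
\[
\int_X \bigl(v_k(t_0, x_0) - p^*(t_0, x_0)\bigr) d\mu_0(x_0) = \inp{v_k(t_0, \bullet)}{\mu_0} - \int_X p^*(t_0, x_0) d\mu_0(x_0).
\]
Using $p^* = P^*$ pointwise on $X$ and the definition \eqref{eq:unsafe_risk} of $M^*$, the second term equals $M^*(t_0, \mu_0)$, which by Corollary \ref{thm:unsafe_risk} coincides with $m^*(t_0, \mu_0)$ under A1--A6, and by the preceding strong duality theorem equals $J^*(t_0, \mu_0)$.

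Finally, by the hypothesis $\inp{v_k(t_0, \bullet)}{\mu_0} \to J^*(t_0, \mu_0)$, the right-hand side tends to zero, establishing \eqref{eq:l1_converge}. The only mildly delicate point (and the one I would double-check) is that $p^*(t_0, \cdot)$ is $\mu_0$-measurable and integrable so that the splitting of the integral is legitimate; this follows from the fact that $p^*(t_0, \cdot) = P^*(t_0, \cdot)$ is bounded between $0$ and $1$ and is dominated by every feasible $v_k(t_0, \cdot) \in C([t_0,T]\times X)$, and hence is a pointwise limit (or infimum) of continuous functions, therefore Borel measurable on the compact set $X$.
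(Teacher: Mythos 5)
Your proof is correct, and it closes the argument by a genuinely different route than the paper. Both proofs start the same way: each $v_k$ is feasible, so Theorem \ref{thm:sublevel_1} (plus the no-relaxation-gap identification $p^*(t_0,\cdot)=P^*(t_0,\cdot)$) gives the pointwise domination $v_k(t_0,x_0)\geq p^*(t_0,x_0)$, which makes the integrand nonnegative. The difference is in how each argument pins down $\int_X p^*(t_0,x_0)\,d\mu_0(x_0)$. The paper argues by contradiction: if the gap stayed above some $\eta>0$ along a subsequence, then $\inp{p^*(t_0,\bullet)}{\mu_0}\leq J^*(t_0,\mu_0)-\eta$, which it rules out by asserting that $p^*(t_0,\cdot)$ is itself \emph{feasible} for \eqref{eq:risk_cont} --- a claim that is somewhat delicate, since $p^*(t_0,\cdot)$ need not belong to $\cs$ or even be continuous. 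You instead compute directly: $\int_X p^*\,d\mu_0 = \int_X P^*\,d\mu_0 = M^*(t_0,\mu_0) = m^*(t_0,\mu_0) = J^*(t_0,\mu_0)$, using the definition \eqref{eq:unsafe_risk}, Corollary \ref{thm:unsafe_risk}, and the strong duality theorem, so the $L_1$ gap equals $\inp{v_k(t_0,\bullet)}{\mu_0}-J^*(t_0,\mu_0)\to 0$ with no contradiction needed. This is cleaner and sidesteps the feasibility assertion entirely. One small repair: your measurability argument says $p^*(t_0,\cdot)$ is dominated by every feasible $v$ and is ``hence'' an infimum of continuous functions --- domination alone does not give that. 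What does give it is the pointwise strong duality with $\mu_0=\delta_{x_0}$ (exactly the identity the paper records at the start of its own proof), which yields $p^*(t_0,x_0)=\inf\{v(t_0,x_0) : v \text{ feasible for } \eqref{eq:risk_cont_lie}\text{--}\eqref{eq:risk_cont_v}\}$; as a pointwise infimum of continuous functions this is upper semicontinuous, hence Borel measurable and (being bounded in $[0,1]$ on a compact set) $\mu_0$-integrable. With that one citation made explicit, your proof is complete.
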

\begin{proof}
See Appendix \ref{app:risk_convergence}.

  


    


    
\end{proof}
\begin{rem}
\rw{Choosing the reference distribution $\mu_0$ as the uniform distribution over the compact $X$ gives a practitioner most possible information about the structure of $P^*$.}
\end{rem}

\section{Unsafety Semidefinite Programs}
\label{sec:unsafe_sos}


\rw{The infinite-dimensional linear programs in \eqref{eq:unsafe_cont} and \eqref{eq:risk_cont} must be truncated into finite-dimensional optimization problems in order to admit tractable numerical computation. \ra{One such method to perform this truncation is the moment-\ac{SOS} hierarchy.} This section will start by reviewing the moment-\ac{SOS} hierarchy of \acp{SDP} for truncations of infinite-dimensional \ac{LP} with polynomially structured data, \ra{and then will apply the hierarchy} towards truncation of \eqref{eq:unsafe_cont} and \eqref{eq:risk_cont}.}

\subsection{Sum-of-Squares Methods}
\label{sec:sos}

\rw{Let} $c \in \R[x]_{\leq 2k}$ be a polynomial. The polynomial $c$ is nonnegative if $\forall x \in \R^n: c(x) \geq 0$. The polynomial $c$ is \ac{SOS} if there exists $N$ polynomials $q_j \in \R[x]$ such that $c(x) = \sum_{j=1}^N q_j^2(x)$. The cone of \ac{SOS} polynomials is $\Sigma[x]$, and its subset of \ac{SOS} polynomials with degree $\leq 2k$ is $\Sigma[x]_{\leq 2k}.$ The set of \ac{SOS} polynomials equals the cone of nonnegative polynomials only in the \rw{cases of univariate polynomials $(n=1)$, quadratics ($2k=2$), or bivariate quartics $(n=2, 2k=4)$) }\cite{hilbert1888darstellung}. 
Verification of polynomial nonnegativity is generically an NP-hard task, and \ac{SOS} polynomials \rw{vanish} in the set of nonnegative polynomials as $n$ and $d$ rise \cite{blekherman2006there}.

\ac{SOS} representations up to fixed degree can be conducted by formulating \acp{SDP}. To each  polynomial $c \in \R[x]_{\leq 2k}$, there exists a (typically nonunique) associated polynomial vector $v_k(x) \in (\R[x]_{\leq d})^s$ and a symmetric \textit{Gram} matrix $Q \in \psd^s$ such that $c(x) = v_k(x)^T Q v_k(x)$. One such choice of vectors $v_k(x)$ is the set of monomials in $x$ between degrees $0..k$, in which case $s = \binom{n+k}{k}$. A polynomial $c$ is \ac{SOS} if its Gram matrix $Q$ is also \ac{PSD} \cite{choi1995sums}.

\Iac{BSA} set $\K$ is a set described by a finite number of bounded-degree polynomial inequality $(N_g)$ and equality constraints $(N_h)$:
\begin{align}
    \K = \{x \in \R^n \mid  g_i(x) \geq 0, \ h_j(x) = 0, \forall i\in1..N_g, j\in1..N_h\}. \label{eq:bsa}
\end{align}
The class of \ac{WSOS} polynomials over $\K$ in \eqref{eq:bsa} is the set $\Sigma[\K]$ of polynomials $c$ that admit a description of
\begin{subequations}
\label{eq:psatz_noeps}
\begin{align}
c(x) = &\sigma_0(x) + \textstyle \sum_i {\sigma_i(x)g_i(x)} + \textstyle \sum_j {\phi_j(x) h_j(x)} \\
& \sigma_0, \sigma_i \in \Sigma[x], \quad \phi_j \in \R[x].
\end{align}
\end{subequations}
The set $\K$ is compact if there exists a finite $R>0$ such that $\{x \mid \norm{x}_2^2 \leq R\} \supseteq \K$. The set is additionally Archimedean if $R - \norm{x}_2^2 \in \Sigma[\K].$ Not every compact set is Archimedean \cite{cimpric2011closures}, but if an $R$ verifying compactness of $\K$ is known, then the constraint $R - \norm{x}_2^2 \geq 0$ can be added to the description of $\K$ to render $\K$ Archimedian. Every compact polytope and ellipsoid  is also Archimedean \cite{lasserre2009moments}.

The Putinar Positivestellensatz states that every positive polynomial over $\K$ is also a member of $\Sigma[\K]$ when $\K$ is Archimedean \cite{putinar1993compact}. However, the polynomial degrees of the multipliers $(\sigma, \phi)$ needed to certify nonnegativity of such a positive $c$ could be exponential in $n$ and $k$ \cite{nie2007complexity}.



\subsection{Sum-of-Squares Assumptions }
In order to apply \ac{SOS} methods towards convergent approximation of problems \eqref{eq:unsafe_cont} and \eqref{eq:risk_cont}, we require the additional assumptions of polynomial structure:
\begin{enumerate}
    \item[A7] The sets $X_0, \ X_u, \ X$ are all Archimedean \ac{BSA} sets.
    \item[A8] The generator $\Lie$ is closed under polynomials ($v(t, x) \in \R[t, x] \implies \Lie v(t, x) \in \R[t, x]$).
\end{enumerate}

Given a degree $k$ and a generator $\Lie$ obeying Assumption A8, we can define the dynamics degree $\tilde{k}$ as $\tilde{k} = \left \lceil\max_{v \in \R_{\leq 2k}[t, x]} \deg(\Lie v(t, x))/2 \right \rceil.$
    


\subsection{Unsafe-Probability SDP}

The degree-$k$ \ac{SOS} tightening of program \eqref{eq:unsafe_cont} is
\begin{subequations}
\label{eq:unsafe_cont_sos}
\begin{align}
    d^*_k(t_0, X_0)    =& \inf_{\gamma \in \R} \quad \gamma \label{eq:unsafe_cont_sos_obj} \\
   \ra{\text{s.t.} \qquad } & \gamma -  v(t_0, x) \in \Sigma[X_0]_{\leq 2k} \label{eq:unsafe_cont_sos_v_init}\\
    & -\Lie v(t,x) \in \Sigma[[0, T] \times X]_{\leq 2\tilde{k}} \label{eq:unsafe_cont_sos_v_lie}\\
    & v(t, x) \in \Sigma[[0, T] \times X]_{\leq 2k} \label{eq:unsafe_cont_sos_v_comp}\\
    & v(t, x) - 1\in \Sigma[[0, T] \times X_u]_{\leq 2k},\label{eq:unsafe_cont_sos_v_unsafe}
\end{align}
\end{subequations}
\rw{and the degree-$k$ \ac{SOS} tightening of program \eqref{eq:risk_cont} is}
\begin{subequations}
\label{eq:risk_sos}
\begin{align}
    d^*_k(t_0, \mu_0)    =& \inf \int_X v(t_0, x) d\mu_0(x) \\        
   \ra{\text{s.t.} \qquad } & -\Lie v(t,x) \in \Sigma[[0, T] \times X]_{\leq 2\tilde{k}} \label{eq:risk_sos_v_lie}\\
    & v(t, x) \in \Sigma[[0, T] \times X]_{\leq 2k} \label{eq:risk_sos_v_comp}\\
    & v(t, x) - 1\in \Sigma[[0, T] \times X_u]_{\leq 2k}. \label{eq:risk_sos_v_unsafe}
\end{align}
\end{subequations}


\begin{rem}
\rw{The objectives are always upper-bounded by \rw{$d_k^*(t_0, X_0) \leq 1$ and }$ d_k^*(t_0, \mu_0) \leq 1$ because $v(t, x) = 1$ is always feasible for their constraints (from Proposition \ref{prop:upper_bound}).}
\end{rem}
\begin{thm}
\label{thm:sos_converge}
       Under Assumptions A1-A8, program \eqref{eq:unsafe_cont_sos} will converge to \eqref{eq:unsafe_cont} with $\lim_{k\rightarrow \infty} d^*_k(t_0, X_0) = d^*(t_0, X_0)$, and \eqref{eq:risk_sos} will converge to \eqref{eq:risk_cont} as  $\lim_{k\rightarrow \infty} d^*_k(t_0, \mu_0) = d^*(t_0, \mu_0)$.
\end{thm}
\begin{proof}
    See Appendix \ref{app:moment_sos_converge}.
\end{proof}

Let $v_k(t, x) \in \Sigma[[0, T] \times X]_{\leq 2k}$ be the  solution to \eqref{eq:risk_sos} at degree-$k$, and let $I_{u}$ be the $0/1$ indicator function on the unsafe set $X_u$. Then the probability of unsafety when starting at a point $x_0 \in X$ is upper-bounded by
\begin{align}
\label{eq:parametric_subvalue}
    q_{1:k}(x) = \min(1, \min_{k' \in 1..k} v_{k'}(t_0, x)).
\end{align}

\begin{cor}
    The sequence of functions $q_{1:k}(x)$ in increasing $k$ will converge in measure $\mu_0$ to $v(t_0, x)$.
\end{cor}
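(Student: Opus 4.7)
The plan is to combine the pointwise upper bound $v_k(t_0, x) \geq p^*(t_0, x)$ from Theorem \ref{thm:sublevel_1} with the $L_1(\mu_0)$ convergence of $v_k(t_0, \cdot)$ to $p^*(t_0, \cdot)$ from Theorem \ref{thm:converge_sequence_init}, and push both through the definition of $q_{1:k}$ via a squeeze argument. Here I interpret the target $v(t_0, x)$ in the statement as the true pointwise probability-of-unsafety map $p^*(t_0, x)$ defined in \eqref{eq:unsafe_prob_singlept}, since each $v_{k'}(t_0,\cdot)$ dominates $p^*(t_0,\cdot)$ pointwise and $p^*(t_0,\cdot)\leq 1$.

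The first step is to establish the pointwise sandwich
\[
p^*(t_0, x) \;\leq\; q_{1:k}(x) \;\leq\; v_k(t_0, x) \qquad \forall x \in X,\ k \geq 1.
\]
The lower bound follows by taking the minimum over $k'\in 1..k$ of the inequalities $v_{k'}(t_0,x)\geq p^*(t_0,x)$ provided by Theorem \ref{thm:sublevel_1}, combined with the trivial bound $1\geq p^*(t_0,x)$ (as $p^*$ is a probability). The upper bound is immediate from the definition \eqref{eq:parametric_subvalue} of $q_{1:k}$, since the minimum is taken over a set containing $v_k(t_0,x)$ and the cap at $1$. Note also that $q_{1:k}(x)$ is monotonically non-increasing in $k$, which shows that the sequence is well-behaved and in fact converges pointwise everywhere to some limit $q_\infty(x)\geq p^*(t_0,x)$.

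Next I would transfer $L_1$ convergence from the $v_k$'s to the $q_{1:k}$'s by integrating the sandwich against $\mu_0$:
\[
0 \;\leq\; \int_X \bigl(q_{1:k}(x) - p^*(t_0, x)\bigr)\, d\mu_0(x) \;\leq\; \int_X \bigl(v_k(t_0, x) - p^*(t_0, x)\bigr)\, d\mu_0(x) \;\xrightarrow[k\to\infty]{}\; 0,
\]
where the right-hand side vanishes by Theorem \ref{thm:converge_sequence_init}. Hence $q_{1:k}\to p^*(t_0,\cdot)$ in $L_1(\mu_0)$. Convergence in measure then follows by the standard Markov inequality: for every $\varepsilon>0$,
\[
\mu_0\bigl(\{x\in X : |q_{1:k}(x)-p^*(t_0,x)|>\varepsilon\}\bigr) \;\leq\; \varepsilon^{-1}\,\|q_{1:k}-p^*(t_0,\cdot)\|_{L_1(\mu_0)}\;\to\;0.
\]

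The main obstacle is really just a notational one, namely pinning down the intended limit function and recording that all the ingredients (the pointwise dominance from Theorem \ref{thm:sublevel_1}, the $L_1$ convergence from Theorem \ref{thm:converge_sequence_init}, and the definition \eqref{eq:parametric_subvalue}) line up for a one-line squeeze. Once that is made explicit the corollary reduces to a direct consequence of already-proved results, with no further analytical work required.
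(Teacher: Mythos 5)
Your proposal is correct and follows essentially the same route as the paper, whose proof is a one-line appeal to Theorem \ref{thm:converge_sequence_init} plus the observation that the minimization in \eqref{eq:parametric_subvalue} only sharpens the estimate; your sandwich $p^*(t_0,x)\leq q_{1:k}(x)\leq v_k(t_0,x)$ and the Markov-inequality step from $L_1(\mu_0)$ convergence to convergence in measure simply make explicit what the paper leaves implicit. Your reading of the (ambiguously stated) limit as $p^*(t_0,\cdot)$ is the right one, since that is the limit supplied by Theorem \ref{thm:converge_sequence_init}.
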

\begin{proof}
    This corollary follows from Theorem \ref{thm:converge_sequence_init}, in which the sequence $\{v_{k'}\}_{\rw{k' \in 1\ldots k}}$ is used to approximate $v^*$. 
\end{proof}
\begin{rem}
    \rw{Let $v_1, v_2$ be polynomials found by solving \eqref{eq:risk_sos} at degrees  $k_1 \in k_2 \in 1..k$ with $k_1 \leq k_2$ with objectives $d_{k_1}(t_0, \mu_0) \geq d_{k_2}(t_0, \mu_0)$. It may not be true that $v_1 \geq v_2$ pointwise. Taking a minimum along all degrees in \eqref{eq:parametric_subvalue} when finding $q_{1:k}$ allows for a sharper estimate of $P^*$ from above than simply taking $v_k$ as the risk contour.  }
\end{rem}



\subsection{Computational Complexity}

We will quantify computational complexity of the degree-$k$ tightenings of \eqref{eq:unsafe_cont_sos} and \eqref{eq:risk_sos} by the size of the maximal Gram matrices involved in their \ac{SOS} program. In the typical case where $\tilde{k} > k$ (only violated under A1-A8 when $\Lie$ maps every polynomial to a constant), the largest Gram matrix will occur in the Lie constraints \eqref{eq:unsafe_cont_sos_v_lie} and \eqref{eq:risk_sos_v_lie}. The Lie constraints each have $n+1$ variables $(t, x)$, so the Gram matrix size when using the monomial basis is $\binom{n+1+\tilde{k}}{\tilde{k}}.$ All other constraints have a lower degree ($k$ rather than $\tilde{k}$), or are posed only over the $n$ variables $x$.
The complexity of using an interior-point method to solve the \ac{SOS} programs will therefore scale based on $O((n+1)^{6\rw{k}})$ for fixed $\rw{k}$ or $O(\rw{k}^{(n+1)})$ for fixed $n$ \cite{lasserre2009moments}.

\section{Numerical Examples}

\label{sec:examples}
MATLAB (2021a) code to reproduce all examples is available at \url{https://github.com/jarmill/prob_unsafe}. All programs are modeled using YALMIP \cite{lofberg2004yalmip} and solved using Mosek 10 \cite{mosek92}. All examples will involve an initial time of $t_0 = 0$. \rw{Experiments were run on an Intel Core i9 2.30GHz with 16.0 GB of RAM allocated to MATLAB.}
\subsection{Two-Dimensional Cubic SDE}
\label{sec:example_cubic}
Our first demonstration analyzes safety of a cubic polynomial \ac{SDE} from Example 1 of \cite{prajna2004stochastic}:
\begin{equation}
\label{eq:flow_sde}
    dx = \begin{bmatrix}x_2 \\ -x_1 -x_2 - \frac{1}{2}x^3_1\end{bmatrix}dt + \begin{bmatrix} 0 \\ 0.1 \end{bmatrix}dW.
\end{equation}
Safety of \eqref{eq:flow_sde} is evaluated within the state space of $X = [-2, 2]^2$ until a time horizon of $T=5$. The unsafe set is a moon-shaped region $X_u = \{x \in \R^2 \mid 0.9083^2 \leq (x_1 + 0.5006)^2 + (x_2 + 0.2902)^2, 0.5^2 \geq (x_1 -0.2)^2 + x_2^2 \}.$ The initial set $X_0$  is a circle of radius $R_0$ and center $[0.85; -0.75]$. Figure \rw{\ref{fig:flow_traj}} plots trajectories of \eqref{eq:flow_sde} starting from $X_0$ (magenta region with $R_0=0.2$). Note how some sampled trajectories touch the right corner of the red half-circle $X_u$ (and are therefore unsafe). Table \ref{tab:unsafe_flow} reports the probability of unsafety computed by program \eqref{eq:unsafe_cont_sos} for a circular initial set of radius $R_0$.
\begin{figure}[!h]
    \centering    
    \includegraphics[width=0.5\linewidth]{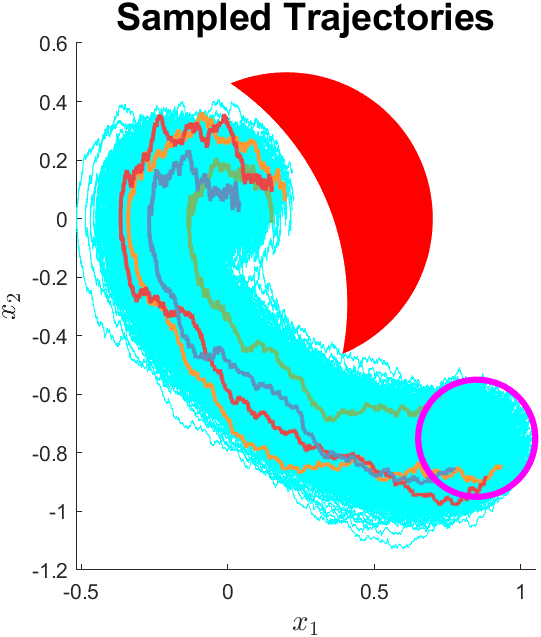}
    \caption{
    Trajectories of \ac{SDE} \eqref{eq:flow_sde} (cyan) initialized in a disk $X_0$ (purple boundary), hitting an unsafe region $X_u$ (red moon).
    }
    \label{fig:flow_traj}
\end{figure}



\begin{table}[!h]
\centering
   \caption{Unsafe probability upper-bounds for system \eqref{eq:flow_sde} \label{tab:unsafe_flow}}
\begin{tabular}{lllllll}
\multicolumn{1}{c}{order} & \multicolumn{1}{c}{1} & \multicolumn{1}{c}{2} & \multicolumn{1}{c}{3} & \multicolumn{1}{c}{4} & \multicolumn{1}{c}{5} & \multicolumn{1}{c}{6} \\ \hline
$R_0=0$    &     1& 0.9442& 0.6072& 0.4805& 0.4068& 0.3696           \\
$R_0=0.2$  &     1& 0.9736& 0.7943& 0.7091& 0.6390& 0.6132
\end{tabular}
\end{table}


Figure \ref{fig:risk_circ} plots the {unsafety} upper-bounding function $\min(1, v(0, x))$ {at $T=5$} acquired by solving \eqref{eq:unsafe_cont_sos} at order {$k=$}6 with $R_0=0.2$. The unsafe set $X_u$ is the red moon. The magenta circle is {the boundary of} the initial set $X_0$. Note how the probability estimate is sharper in the region surrounding $X_0$, as compared to the sea of $\prb=1$ saturation away from $X_0$.

In contrast, Figure \ref{fig:risk_X} solves the order-6 \ac{SOS} tightening of program \eqref{eq:risk_cont} to produce a risk map $v$ valid in $X = [-2, 2]^2$. This risk map results in $v(0, [0.85; -0.75]) = 0.4366,$ which is looser than the order-6 probability bound of $0.3696$ from Table \ref{tab:unsafe_flow}. However, Figure \ref{fig:risk_X} produces an interpretable visualization of risk across the entire set $X$.

\begin{figure}[!h]
     \centering
     \begin{subfigure}[b]{0.49\linewidth}
         \centering
         \includegraphics[width=\textwidth]{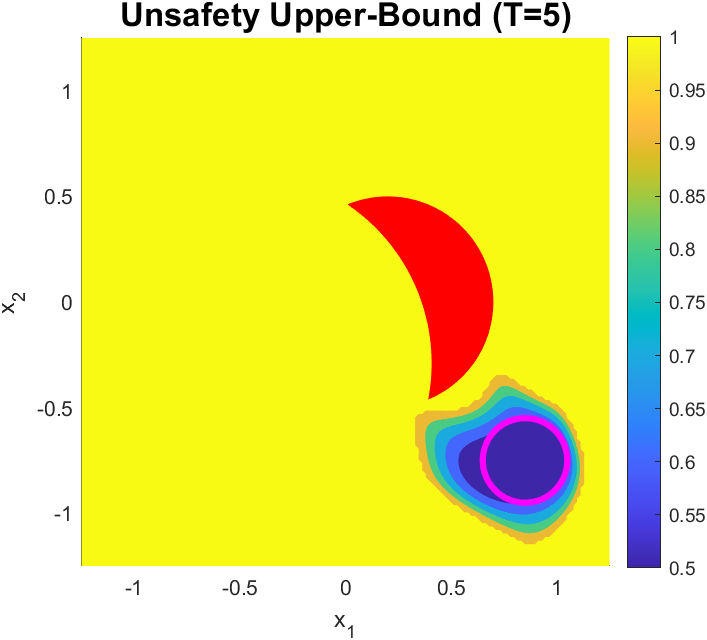}
         \caption{{solution of} \eqref{eq:unsafe_cont_sos}}
         \label{fig:risk_circ}
     \end{subfigure}
     \begin{subfigure}[b]{0.49\linewidth}
         \centering
         \includegraphics[width=\textwidth]{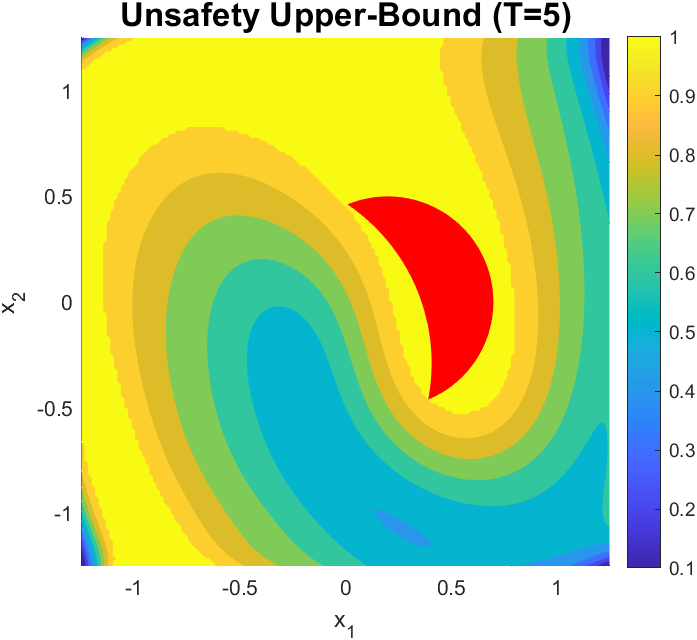}
         \caption{{solution of} \eqref{eq:risk_sos}}
         \label{fig:risk_X}
     \end{subfigure}
        \caption{Risk levels of {the unsafety upper-bound function} $v(0, x)$ at {$t=T=5$} and order $k=$6 for {SDE \eqref{eq:flow_sde}
      initialized in a disk $X_0$ (purple boundary) with unsafe region $X_u$ (red moon).}}
        \label{fig:risk_cubic}
\end{figure}



\subsection{Four-Dimensional SDE}

This example is inspired by dynamics of floating offshore platforms (based on the two-dimensional system of Example 5 of \cite{xue2023reach}). Two platforms are connected together through a restorative force (nonlinear spring), and the second platform is attracted to the point $x=0$ (by a buoy/anchor). The platforms experience a stochastic force that models the buffeting by waves.
The positions of the two platforms are $(x_1, x_2)$ and their velocities are $(\dot{x}_1, \dot{x}_2)$. 
\ra{The nonlinear Hooke force of the connecting spring is}
\begin{align}
    F_{12}(x) &= ((x_2 - x_1) - 0.5) - 0.05((x_2 - x_1) - 0.5)^3. \nonumber
\end{align}
The dynamics of this system are
\begin{align}
    d x_1 &= \dot{x}_1 dt \qquad \qquad \qquad 
    d x_2 = \dot{x}_2 dt  \label{eq:platform}\\
    d \dot{x}_1 &=  (F_{12}(x)- \dot{x}_1)dt +(-\dot{x}_1 + 1)dW_1 \nonumber\\    
    d \dot{x}_2 &= (-F_{12}(x) - \dot{x}_2 - x_2)dt +(-\dot{x}_1 + 1)dW_2.\nonumber
\end{align}
The state of the system is $x = (x_1, x_2, \dot{x}_1, \dot{x}_2)$\ra{.}
The system in \eqref{eq:platform} is executed in the space $X = \{x \in \R^4 \mid (x_1, x_2) \in [-2, 2]^2, \ (\dot{x}_1, \dot{x}_2) \in [-1, 1]^2\}$ for a time horizon of $T=8$. The platforms begin their execution at the point $x = (-0.25, 0.5, 0.2, 0.1)$. Figure \ref{fig:platform_state} plots 300 random realizations of the positions and velocities (in phase space) of the two platforms following dynamics \eqref{eq:platform} starting from initial condition (magenta dots). The left (blue) trajectories are the behavior of platform 1 $(x_1, \dot{x}_1)$, whereas the right (red) trajectories are the response of platform 2 $(x_2, \dot{x}_2)$.
The unsafe set is the region $X_u = \{x \in \R^4 \mid (x_1-x_2) ^2 \leq 0.15^2\}$, describing the potential for the platforms to crash into each other. Figure \ref{fig:platform_diff} plots the difference in platform positions $x_2(t) - x_1(t)$ as a function of time $t$ between $t \in [0, T].$ The red dotted line in Figure \ref{fig:platform_diff} denotes the beginning of unsafety with the level set $x_2(t) - x_1(t) = 0.15$. Some empirically sampled trajectories in Figure \ref{fig:platform_diff} cross this red line, signalling that the initial point $x_0$ has the potential to enter the unsafe region.

\begin{figure}[!h]
     \centering
     \begin{subfigure}[b]{0.49\linewidth}
         \centering
         \includegraphics[width=\linewidth]{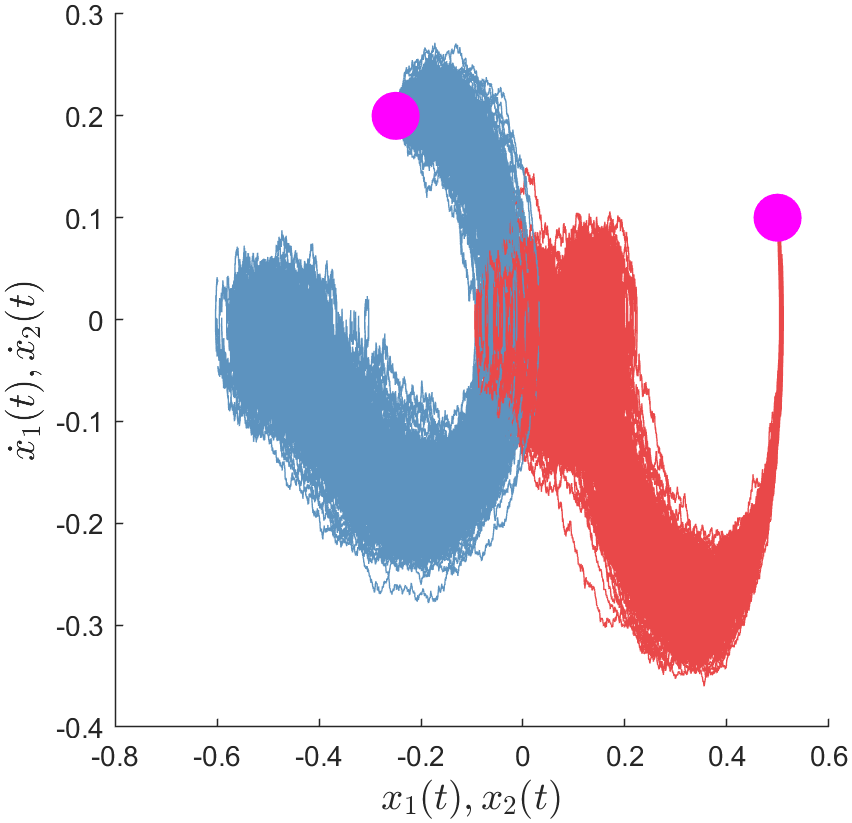}
         \caption{Velocity vs. position}
         \label{fig:platform_state}
     \end{subfigure}
     \hfill
     \begin{subfigure}[b]{0.49\linewidth}
         \centering
         \includegraphics[width=\linewidth]{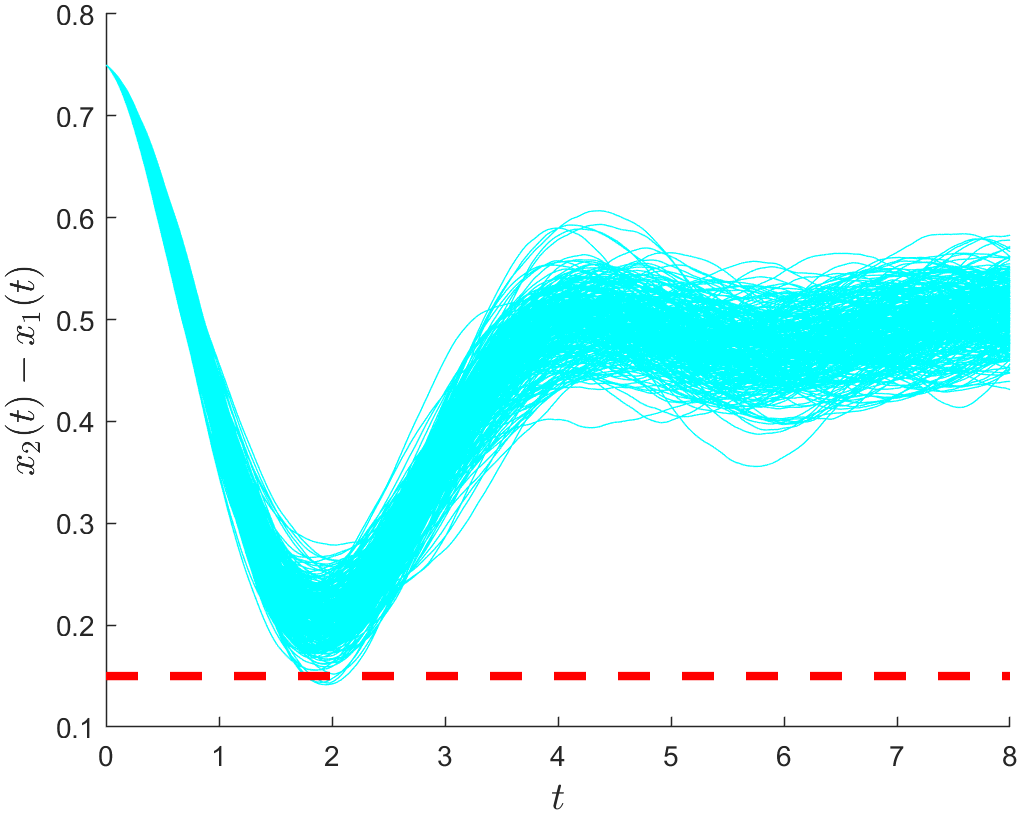}
         \caption{Difference in  positions}
         \label{fig:platform_diff}
     \end{subfigure}     
        \caption{\rw{Positions of the floating platforms from \eqref{eq:platform}}}
        \label{fig:platform}
\end{figure}

Table \ref{tab:unsafe_platform} lists upper bounds for the probability of entering $X_u$ starting from $x_0$ as computed through finite-degree truncations in \eqref{eq:unsafe_cont_sos}. 
\begin{table}[!h]
\centering
\rw{
   \caption{Unsafe probability upper-bounds for system \eqref{eq:platform} \label{tab:unsafe_platform}}
\begin{tabular}{lllllll}
\multicolumn{1}{c}{order} & \multicolumn{1}{c}{1} & \multicolumn{1}{c}{2} & \multicolumn{1}{c}{3} & \multicolumn{1}{c}{4} & \multicolumn{1}{c}{5} & \multicolumn{1}{c}{6} \\ \hline
$x_0$  &     1& 0.9236& 0.3068& 0.1344& 0.1162& 0.1121        \\
\end{tabular}
}
\end{table}

\subsection{Discrete-time example}

This subsection will focus on a discrete-time stochastic process involving a time-step of $\tau = 1$. The law of this stochastic process with parameter $\lambda \in \R$ is
\begin{align}
    x_+ = \begin{bmatrix}
        -0.3 x_1  + 0.8x_2 + x_1 x_2 \lambda/4\\ -0.9x_1 -0.1x_2 - 0.2 x_1^2 +  \lambda/40
    \end{bmatrix},\label{eq:scatter_discrete}
\end{align}
in which $\lambda$ is i.i.d. sampled  according to a unit-normal distribution at each time $(\lambda_t \sim \mathcal{N}(0, 1))$.

We evaluate the safety of \eqref{eq:scatter_discrete} with respect to the state set $X = [-1.5, 1.5]^2$, the time horizon of $T = 10$, and the half-circle unsafe set of $X_u = \{x \mid 0.4^2 \geq (x_1 - 0.8)^2 + (x_2 - 0.2)^2, \ x_1 + x_2 \geq 1\}$. Stochastic trajectories  of \eqref{eq:scatter_discrete} evolve starting at an circular initial set with radius $R_0$ and center $[-1; 0]$. 
Figure \ref{fig:discrete_sample} plots 5,000 sampled trajectories (blue dots) with respect to the magenta  circular initial set and the red half-circle unsafe set. Some of the sampled trajectory points fall inside the red half-circle, corresponding to unsafety when beginning in $X_0$.

\begin{figure}[h]
    \centering
    \includegraphics[width=0.8\linewidth]{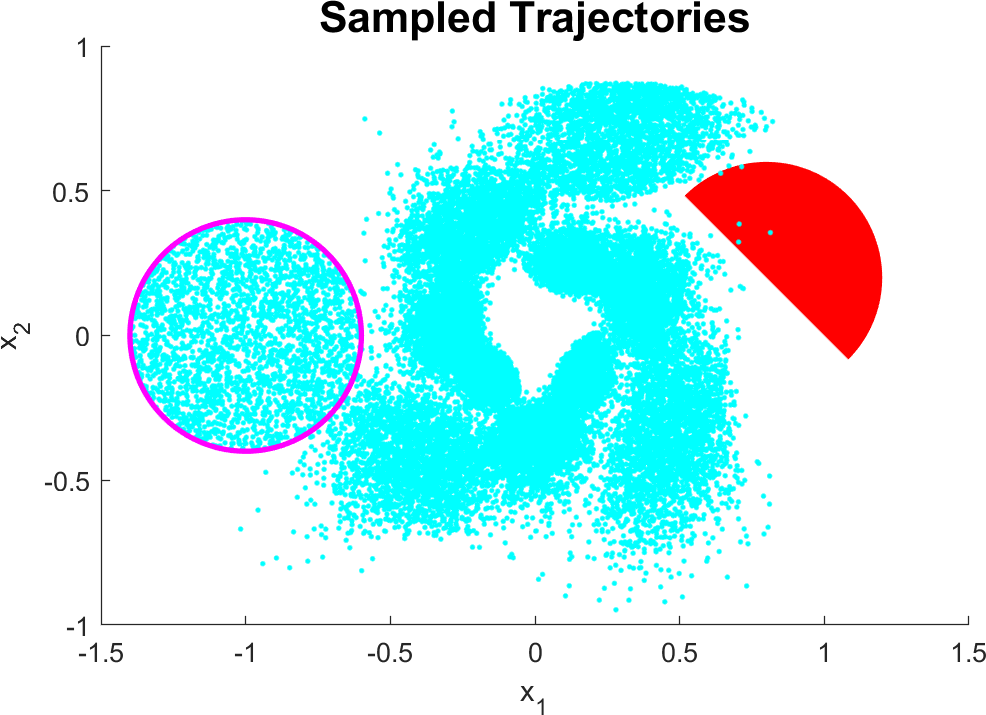}
    \caption{
Trajectories of discrete-time \eqref{eq:scatter_discrete} (cyan) initialized in a disk $X_0$ (purple boundary), hitting an unsafe region $X_u$ (red half disk).  
    }
    \label{fig:discrete_sample}
\end{figure}

Table \ref{tab:unsafe_discrete} reports probabilities of unsafety for \eqref{eq:scatter_discrete} found by solving \eqref{eq:unsafe_cont_sos} at initial radii $R_0 = 0$ and $R_0 = 0.4$. To improve numerical conditioning, we normalize the time-steps from $(\tau, T) = (1, 10)$ to $(\tau, T) = (0.1, 1)$ without affecting the autonomous dynamics in \eqref{eq:scatter_discrete}.

\begin{table}[!h]
\centering
   \caption{Unsafe probability upper-bounds for system \eqref{eq:scatter_discrete} \label{tab:unsafe_discrete}}
\begin{tabular}{lllllll}
\multicolumn{1}{c}{order} & \multicolumn{1}{c}{1} & \multicolumn{1}{c}{2} & \multicolumn{1}{c}{3} & \multicolumn{1}{c}{4} & \multicolumn{1}{c}{5} & \multicolumn{1}{c}{6} \\ \hline
$R_0=0$    &     1& 1& 0.1569& 0.0103& 1.871$\mathbf{e}$-3& 7.052$\mathbf{e}$-4          \\
$R_0=0.4$  &     1& 1& 0.9801& 0.7054& 0.5225& 0.4017
\end{tabular}
\end{table}

Figure \ref{fig:risk_circ_discrete} plots risk contours found by solving \eqref{eq:unsafe_cont_sos} at order 6 for $R_0 =0.4$.
 

Figure \ref{fig:risk_discrete} plots risk contours $v(0, x)$ acquired from \eqref{eq:risk_sos} at degree {$k=$}6  {and time $t=T=10$.} The returned risk map has an evaluation of $v(0, [-1; 0]) = 0.4915$, as compared to the \eqref{eq:unsafe_cont_sos} estimate of $7.052\times 10^{-4}$ at $R_0=0$.


\begin{figure}[!h]
     \centering
     \begin{subfigure}[b]{0.49\linewidth}
         \centering
         \includegraphics[width=\textwidth]{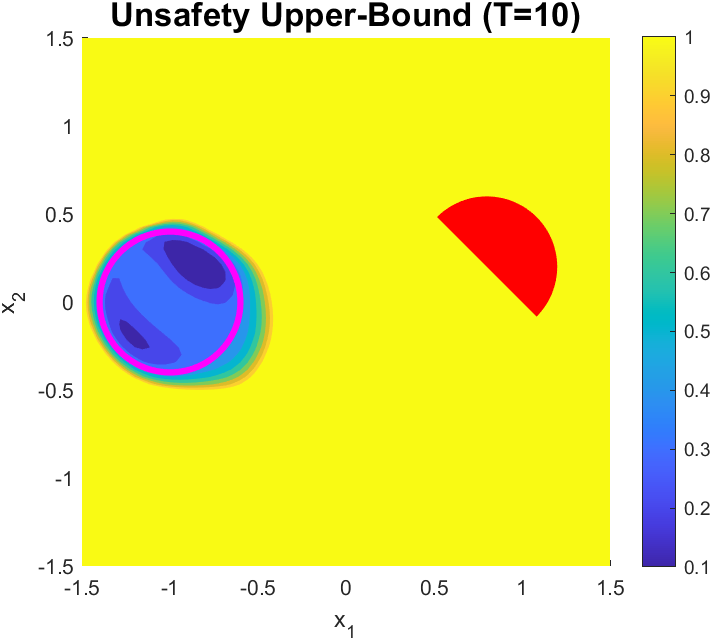}
         \caption{{solution of} \eqref{eq:unsafe_cont_sos}}
         \label{fig:risk_circ_discrete}
     \end{subfigure}
     \begin{subfigure}[b]{0.49\linewidth}
         \centering
         \includegraphics[width=\textwidth]{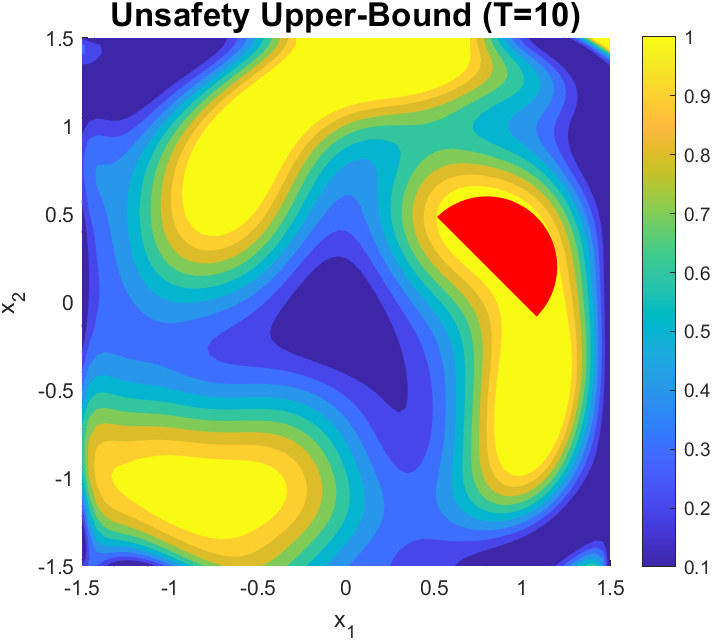}
         \caption{{solution of} \eqref{eq:risk_sos}}
         \label{fig:risk_discrete}
     \end{subfigure}
              \caption{Risk levels of {unsafety upper-bound function} $v(0, x)$ at {$\rw{T}=10$} and order {$k=$}6 for {SDE \eqref{eq:scatter_discrete}
      initialized in a disk $X_0$ (purple boundary) with unsafe region $X_u$ (red moon).}
        \label{fig:risk_discrete_all}}
\end{figure}

\section{Conclusion}

\label{sec:conclusion}

This paper presents a method to analyze the probability of unsafety for stochastic processes by forming \acp{LP} in occupation measures. These \acp{LP} are upper-bounded by the moment-\ac{SOS} hierarchy of \acp{SDP}, yielding a convergent sequence of bounds to the true probability of unsafety. Modification of the objective leads to the development of visually interpretable risk contours for use in analysis and motion planning. The presented method can be used for any stochastic process satisfying Assumptions A1-A8.
Future work involves finding redundant constraints to refine and improve the unsafe-probability bounds. Other work includes  \rw{ developing convex reformulations of  risk-sensitive stochastic control methods}.

\section*{Acknowledgements}

The authors would like to thank Roy S. Smith, Niklas Schmid, and the Automatic Control Laboratory at ETH Z\"{u}rich for their advice and support.


J. Miller and M. Sznaier were partially supported by NSF grants  CNS--2038493 and CMMI--2208182, AFOSR grant FA9550-19-1-0005,  ONR grant N00014-21-1-2431, and DHS grant 22STESE0001-02-00.
J. Miller was partially supported by the Swiss National Science Foundation under NCCR Automation, grant agreement 51NF40\_180545.
J. Miller was in part funded by Deutsche Forschungsgemeinschaft (DFG, German Research Foundation) under Germany's Excellence Strategy - EXC 2075 – 390740016. We acknowledge the support by the Stuttgart Center for Simulation Science (SimTech).



\appendix
\section{Proof of No-Relaxation-Gap}

\label{app:no_relaxation_gap}

\rw{This appendix will contain the proof of Theorem \ref{thm:no_relaxation}. It will start by proving that $p^*(t_0, x_0) \geq P^*(t_0, x_0)$ under Assumption A3 (Upper-Bound), and continue with a proof that $p^*(t_0, x_0) = P^*(t_0, x_0)$ under Assumptions A1-A6.}

\subsection{Upper-Bound}
\label{app:upper_bound_construction}
\rw{Let $x_0 \in X_0$ be an initial condition and $t^* \in [t_0, T]$ be a stopping time. A set of measures $(\mu_0, \mu, \mu_c, \mu_u)$ from \eqref{eq:meas_vars} may be constructed to fulfill constraints \eqref{eq:unsafe_meas_liou}-\eqref{eq:unsafe_meas_support} for each $x_0$. Because this construction is an injective map between $x_0$ and measures such that the cost is preserved, it would therefore hold that $p^*(t_0, X_0) \geq P^*(t_0, X_0)$.

To begin, the initial measure $\mu_0$ may be chosen as the Dirac delta $\delta_{x = x_0}$. The measure $\mu$ can be set to the occupation measure of the stochastic trajectory $x(t \mid x_0)$ between the initial time of ${t_0}$ and the stopping time of $\rw{\min(\tau_X, \tau_{u}, T)}$. 
The time-state distribution at time $T$ is a terminal measure $\mu_{T}$. The unsafe measure $\mu_u$ can be chosen as the restriction of the terminal measure to $[0, T] \times X_u$ (trajectories that stopped according to $\tau_u$), whereas $\mu_c$ is selected as a measure such that $\mu_{T} = \mu_u + \mu_c$. The mass $\inp{1}{\mu_u}$ in the objective \eqref{eq:unsafe_meas_obj} is then equal to $\text{Prob}_{\mu_{T}}[x \in X_u]$ for this trajectory.}
Because there exists a mapping from every  feasible point $x_0$ of \eqref{eq:unsafe_prob_single} to a set of  measures in \rw{\eqref{eq:unsafe_meas_liou}-\eqref{eq:unsafe_meas_support_pre} that preserves the cost (probability of unsafety)}, it holds that $\rw{p^*}({t_0}, X_0) \geq P^*({t_0}, X_0)$.    
\subsection{No-Relaxation-Gap}
\label{app:no_gap_main}
        By Theorem 3.1 of \cite{cho2002linear} (and under Assumptions A1-A6), every  tuple $(\mu_0, \mu, \mu_\tau)$ with  $\mu_0 \in \Mp{[t_0, T] \times X}$  (with $\inp{1}{\mu_0}=1$) and $\mu, \mu_\tau \in \Mp{[t_0, T] \times X}$ satisfying a Martingale relation \eqref{eq:martingale_occ} 
    is supported on the graph of a stochastic process \eqref{eq:unsafe_prob_sde}.

    Given feasible measures $(\mu_0, \mu, \mu_{\rw{u}}, \mu_c)$ satisfying \rw{\eqref{eq:unsafe_meas_liou}-\eqref{eq:unsafe_meas_support_pre}}, 
   it holds that the  tuple $(\mu_0, \mu, \mu_{\rw{u}} + \mu_c)$ 
    satisfies the Martingale relation \eqref{eq:martingale_occ}. The feasible measures are therefore supported on the graph \ra{of}    
    a stochastic process.
    \rw{Because every relaxed occupation measure $(\mu_0, \mu, \mu_{\rw{u}} + \mu_c)$ corresponds to a distribution of true occupation measures of the stochastic trajectory, there is therefore no gap between $P^*(t_0, x_0)$ (over stochastic trajectories) and $p^*(t_0, x_0)$ (over measures).} $\qed$
\section{Strong Duality}

\label{app:duality}

This appendix will prove strong duality between \eqref{eq:unsafe_meas} and \eqref{eq:unsafe_cont} by using arguments from Theorem 2.6 of  \cite{tacchi2021thesis}.
We collect the variables in \eqref{eq:unsafe_meas} and \eqref{eq:unsafe_cont} respectively into,
\begin{align}
    \bbmu &= (\mu_0, \mu, \mu_c, \mu_{\rw{u}}) &   \bell &= (\gamma, v). \label{eq:bbmu}
\end{align}
Variable spaces related to $\bbmu$ are,
\begin{align}
    \mathcal{X}' &= C(X_0) \times  C([0, T]\times X)^2 \times C([0, T] \times X_u) \label{eq:dual_spaces}\\
    \mathcal{X} &= \mathcal{M}(X_0) \times \mathcal{M}([0, T]\times X)^2 \times \mathcal{M}([0, T] \times X_u),\nonumber
\end{align}
with nonnegative subcones of
\begin{align}
    \mathcal{X}_+' &= C_+(X_0) \times C_+([0, T]\times X)^2 \times C_+([0, T] \times X_u) \nonumber \\
    \mathcal{X}_+ &= \Mp{X_0} \times \Mp{[0, T]\times X}^2 \times \Mp{[0, T] \times X_u}. \nonumber
\end{align}
Corresponding spaces to $\bell$ are 
\begin{align}
    \mathcal{Y}' &= {\cs \times \R} & 
    \mathcal{Y} &= {\cs' \times 0}.
\end{align}
Following the notation of \cite{tacchi2021thesis}, we write that $\mathcal{Y}_+ = \{0_\mathcal{Y}\}$ and $\mathcal{Y}_+' = \mathcal{Y}'$. We note the containments of  $\bbmu \in \mathcal{X}_+, \ \bell \in \mathcal{Y}', \ $ and also  that $(\mathcal{X}, \mathcal{X}')$ form a pair of topological dual spaces under Assumption A1. The topologies for the spaces $\mathcal{X}$ and $\mathcal{Y}'$ are the weak-* topology and sup-norm-bounded weak topology respectively.

An adjoint pair of affine maps $\A$ and $\A'$  may be defined as
\begin{align}
    \mathcal{A}(\bbmu) &= [\mu_{\rw{u}} + \mu_c - \Lie^\dagger \mu - \delta_{t_0} \otimes \mu_0, \ \inp{1}{\mu_0}] \\
    \mathcal{A}^*(\bell) &= [\gamma - v(t_0, \bullet), \ -\Lie_v, \ v, \ v],\nonumber
\end{align}
with vectors describing the cost and constraint terms as
\begin{subequations}
\label{eq:cost_constraint}
    \begin{align}
        \mathbf{b} &= [0, \ 1]  & 
        \mathbf{c} &= [0, \ 0, \ 0, \ 1].
    \end{align}
\end{subequations}
Pairings with the vectors in \eqref{eq:cost_constraint} satisfy\begin{subequations}
\begin{align}
\inp{\mathbf{c}}{\boldsymbol{\mu}} &= \inp{1}{\mu} & 
    \inp{\boldsymbol{\ell}}{\mathbf{b}} &= \gamma.
\end{align}
\end{subequations}
Problem \eqref{eq:unsafe_meas} may be expressed  as 
\begin{align}
    p^*(t_0, X_0) =& \sup_{\boldsymbol{\mu} \in \mathcal{X}_+} \inp{\mathbf{c}}{\boldsymbol{\mu}} & & \mathbf{b} - \A(\boldsymbol{\mu}) \in \mathcal{Y}_+. \label{eq:unsafe_meas_std}\\
\intertext{Similarly, the function \ac{LP} in  \eqref{eq:unsafe_cont} can be expressed as}
    d^*(t_0, X_0) = &\inf_{\boldsymbol{\ell} \in \mathcal{Y}'_+} \inp{\boldsymbol{\ell}}{\mathbf{b}}
    & &\A'(\boldsymbol{\ell}) - \mathbf{c} \in \mathcal{X}_+. \label{eq:unsafe_cont_std}
\end{align}
Sufficient conditions to prove strong duality between \eqref{eq:unsafe_meas_std} and \eqref{eq:unsafe_cont_std} (by Theorem 2.6 of \cite{tacchi2021thesis}) are:

\begin{enumerate}
    \item[\rw{R1}] All feasible measures $\bbmu \in \mathcal{X}_+$ with $b - \mathcal{A}(\bbmu) \in \mathcal{Y}_+$ are bounded, and there exists such a feasible $\bbmu$.
    \item[\rw{R2}] All functions used to define $\mathbf{c}, \  \mathbf{b}, \ \mathcal{A}$ are continuous.
\end{enumerate}
Boundedness of measures in R1 is proven in Lemma \ref{lem:bounded_meas}, and feasibility of a measure solution is demonstrated by the construction process used in Appendix \ref{app:upper_bound_construction}. For R2, we note that both $\mathbf{c}$ and $\mathbf{b}$ in \eqref{eq:cost_constraint} are constant and are therefore continuous. Additionally, the set $\cs$ is specifically chosen as the preimage of continuous functions under $\Lie$. R1 and R2 are both satisfied under  Assumptions A1-A6, thus proving strong duality.

\section{Superlevel Approximation of Risk Map}

\label{app:superlevel_risk}

This appendix provides the proof of Theorem \ref{thm:sublevel_1}.


\subsection{Supermartingale Property}

We first introduce the notion of supermartingales before treating the upper-bounding of $P^*$, and then note that any feasible \rw{$v \in \mathcal{R}$ from \eqref{eq:up_prob}} is a supermartingale.

\begin{defn}
    A process $\{\rw{\nu}_t\}$ is a  \textit{supermartingale} if $\E[\rw{\nu}_{t + \Delta t} \mid \{\rw{\nu}_t\}] \leq \rw{\nu}_t$ \cite{williams1991probability}.
\end{defn}

\begin{prop}
\label{prop:supermartingale}
    Any $v \rw{\in \mathcal{R}} $ is a \textit{supermartingale} for the process $\Lie$ \cite{rogers2000diffusions}.  
\end{prop}
\begin{proof}
    Let $\{\rw{\mu_t}\}$ be the state-dependent distribution for a trajectory of the stochastic process $\Lie$ in \eqref{eq:unsafe_prob_sde}, respecting exit time $\rw{\min (\tau_X, \tau_u, T)}$. This set of probability distributions satisfies the Martingale property of \eqref{eq:martingale}. Given that $\Lie v \leq 0$ from \eqref{eq:risk_cont}, it holds that $\E[\Lie_0 v(t, x) \mid \rw{\mu}_{s'}] \leq 0$ for every stopping time $s'$ adapted to $\rw{\min (\tau_X, \tau_u, T)}$. The supermartingale relation $ \E[v(t+s, x) \mid \rw{\mu}_{t + s} ] \leq v(t, x) $ therefore holds for $v$.
\end{proof}

\begin{lem}
\label{lem:prob_bound}
    Let $v(t, x) \in \cs$ be a nonnegative function over $[t_0, T] \times X$ and also form a supermartingale with respect to $\Lie$ and $\rw{\mu_t}$. For a value $\lambda \geq 0$, an initial point $x_0 \in X$, and a $\Lie$-trajectory $x(t)$ starting from $x_0$ at time $t_0$, the following inequality holds:
    \begin{align}
        \textrm{Prob}_{\{\rw{\mu_t}\}}\left[\sup_{t \in [t_0, T]} v(t, x(t)) \geq \lambda \right] \leq v(t_0, x_0)/\lambda.  \label{eq:doob}     
    \end{align}
\end{lem}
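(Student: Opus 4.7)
The plan is to identify $v(t, x(t))$ with a concrete nonnegative supermartingale indexed by $t$ and then appeal to Doob's supermartingale maximal inequality. Concretely, fix the initial condition $x_0 \in X$ at time $t_0$, let $x(t)$ denote the trajectory of the stochastic process with generator $\Lie$, and define the real-valued process $Y_t = v(t, x(t))$ for $t \in [t_0, T]$ (stopped at $\tau_X \wedge T$). The hypothesis that $v \geq 0$ on $[t_0, T] \times X$ gives $Y_t \geq 0$, while the supermartingale hypothesis stated in the lemma (the one established in Proposition~\ref{prop:supermartingale} from $\Lie v \le 0$) gives $\E[Y_{t+s} \mid \mathcal{F}_t] \leq Y_t$ for all $s \geq 0$, where $\mathcal{F}_t$ is the natural filtration of $\{X_t\}$.

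Next I would invoke Doob's supermartingale inequality (in its form for nonnegative supermartingales; see, e.g., \cite{williams1991probability}): for any $\lambda > 0$,
\begin{equation*}
    \lambda \cdot \prb\!\left[\sup_{t \in [t_0, T]} Y_t \geq \lambda \right] \leq \E[Y_{t_0}].
\end{equation*}
Because $x_0$ is deterministic, $Y_{t_0} = v(t_0, x_0)$ almost surely, so $\E[Y_{t_0}] = v(t_0, x_0)$. Dividing by $\lambda$ yields precisely \eqref{eq:doob}. The $\lambda = 0$ boundary case is trivial since probabilities are bounded by $1$ and the right-hand side is vacuously infinite (or interpreted as such), so the inequality holds with the conventional reading.

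I would handle the continuous-time and discrete-time cases uniformly by noting that in both settings A4--A6 ensure $v \in \cs$ is continuous on the compact set $[t_0, T] \times X$, hence bounded, which legitimizes applying Doob's inequality for càdlàg nonnegative supermartingales (continuous time) or the classical discrete-time maximal inequality. A brief remark should address the stopping time $\tau_X \wedge T$: because $Y$ stopped at a bounded stopping time remains a nonnegative supermartingale by optional stopping, the supremum in \eqref{eq:doob} is well defined and the bound is unaffected.

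The main conceptual obstacle is not computational but rather the verification that $Y_t = v(t, x(t))$ genuinely satisfies Doob's hypotheses for the class of processes covered by $\Lie$ under A1--A6 (Markov, SDE, Lévy, discrete). Once Proposition~\ref{prop:supermartingale} is in hand, this reduces to citing standard martingale theory, so the proof itself is short; the care lies in making sure the supermartingale property on the process $\{X_t\}$ is correctly transported to the real-valued process $\{Y_t\}$ at arbitrary pairs of times $t \leq t+s$ within $[t_0, T]$.
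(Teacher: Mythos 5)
Your proposal is correct and follows essentially the same route as the paper, which does not prove Lemma~\ref{lem:prob_bound} directly but defers to Lemma~6 of \cite{prajna2004stochastic} --- itself a consequence of Doob's nonnegative-supermartingale maximal inequality applied to $Y_t = v(t, x(t))$, exactly as you do. Your explicit treatment of the stopped process at $\tau_X \wedge T$ and of the time-dependence of $v$ matches the modifications the paper notes in the remark following the lemma.
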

\begin{proof}
    \rw{This statement follows by using the steps of the proof of Lemma 6 of \cite{prajna2004stochastic}, with the modifications of} stopping at time $T$ and allowing for a time-dependent $v(t, x)$.
\end{proof}

\subsection{Superlevel Property}

Theorem \ref{thm:sublevel_1} can now be proven based on arguments from Theorem 7 of \cite{prajna2004stochastic}.
The unsafe set $X_u$ is inside the 1-superlevel set of $v(t, x)$ at every time $t \in [t_0, T]$ \rw{by definition in $\mathcal{R}$}. 
This superlevel relation implies that 
    \begin{align}
       P(t_0, x_0)= &\prb_{\{\rw{\mu}_{\rw{T}}\}}\left[ x(t^*) \in X_u \mid x_0 \right]  \\
        &  \leq \prb_{\{\rw{\mu_t}\}}\left[\sup_{t \in [t_0, T]} v(t, x(t)) \geq 1 \mid x_0 \right]. \nonumber
    \end{align} Through Lemma \ref{lem:prob_bound} with $\lambda=1$, it holds that \begin{equation}
        \prb_{\{\rw{\mu_t}\}}\left[\sup_{t \in [t_0, T]} v(t, x(t)) \geq 1 \mid x_0 \right] \leq v(t_0, x_0),
    \end{equation} thus proving the theorem.
\section{Risk Contour Convergence}
\label{app:risk_convergence}

This appendix will provide a proof of Theorem \ref{thm:converge_sequence_init}.

  We first notice that
    \begin{align*}
        0 \leq p^*(t_0,x_0) & = \sup \{\inp{1}{\mu_p} \; | \; \mu_p + \mu_c = \delta_{(t_0,x_0)} + \Lie^\dagger \mu\} \\ & = \inf \{v(t_0,x_0) \; | \; \Lie v \leq 0 \; ; \; v \geq I_{X_u}\}
    \end{align*}
    by applying Theorem~\ref{thm:sublevel_1} (using A1-A6) to the case where $\mu_0 = \delta_{x_0}$. We remark that our $v_k$ is feasible for the dual problem on functions \eqref{eq:risk_cont}, so  by optimality of $p^*(t_0, x_0)$ for each $x_0 \in X_0$, one has $v_k(t_0,x_0) \geq p^*(t_0,x_0)$. Moreover, as $p^*(t_0, \mu_0) \leq 1 < \infty$ by \rw{Proposition  \ref{prop:upper_bound}}, each
    $v_k(t_0,\bullet)$ is $\mu_0$-integrable, and thus the function $v^\star(t_0,\bullet)$ is $\mu_0$-integrable as well. It remains to prove that their difference $v^k(t_0, \bullet) - p^*(t_0, \bullet)$ ultimately vanishes in a $\mu_0$-sense. This vanishing difference is proven  by contradiction: suppose that there exists a $\eta > 0$ such that for all $k \in \N$, there is a $\varphi(k) \geq k$ with $\inp{v_{\varphi(k)}(t_0,\bullet) - p^*(t_0,\bullet)}{\mu_0} \geq \eta$ (which is the negation of \eqref{eq:l1_converge}). Then, one has
    $$\inp{p^*(t_0,\bullet)}{\mu_0} \leq \inp{v_{\varphi(k)}(t_0,\bullet)}{\mu_0} - \eta \underset{k\to\infty}{\longrightarrow} J^*(t_0, \mu_0) - \eta.$$
    However, as we already noticed, $p^*(t_0, x_0)$ is feasible for~\eqref{eq:risk_cont}, so this is a contradiction with optimality of the value $J^*(t_0, \mu_0)$ (given that $J^* - \eta < J^*$). Such a contradiction proves \eqref{eq:l1_converge}, and therefore proves Theorem \ref{thm:converge_sequence_init}.
\section{Convergence of the Moment-SOS hierarchy}

\label{app:moment_sos_converge}
\rw{
This appendix proves convergence of the \ac{SOS}-hierarchy to the unsafety \acp{LP} (Theorem \ref{thm:sos_converge}).
To begin, we require a lemma that establishes boundedness.}

\begin{lem}
\label{lem:bounded_meas}
    Under Assumptions A1-A6, all feasible measures $\mu_0, \mu, \mu_c, \mu_{u}$ in \eqref{eq:unsafe_meas} are bounded.
\end{lem}
\begin{proof}
    Boundedness of nonnegative measures will be proven by the sufficient condition of compact support and finite mass. Compact support of measures in $\bbmu$ is ensured by A1. The initial measure $\mu_0$ is a probability distribution $(\inp{1}{\mu_0} = 1)$ by constraint \eqref{eq:unsafe_meas_prob}. The sum $\mu_c + \mu_{\rw{u}}$ likewise has mass 1 by the Liouville constraint \eqref{eq:unsafe_meas_liou}, when passing in the test function $v = 1$. Given that $\mu_c$ and $\mu_{\rw{u}}$ are each nonnegative measures, it holds that they both have finite mass (upper-bounded by 1). Lastly, assignment of $v = t$ to \eqref{eq:unsafe_meas_liou} with $\Lie t = 1$ results in $\inp{1}{\mu} = \inp{t}{\mu_c + \mu_{\rw{u}}} \leq T$. All measures have bounded masses and compact supports, and therefore are bounded.
\end{proof}
\rw{
\begin{cor}
\label{cor:bounded_risk}
    Under A1-A6, all measures $\mu, \mu_c, \mu_{\rw{u}}$ that are feasible for \eqref{eq:unsafe_meas_given} are bounded if the given  $\mu_0$ is a probability distribution.
\end{cor}
\begin{proof}
    This immediately follows from Lemma \ref{lem:bounded_meas}, because the tuple $(\mu_0, \mu_c, \mu_{\rw{u}})$ are bounded is a bounded solution to \eqref{eq:unsafe_meas_liou}-\eqref{eq:unsafe_meas_support}. 
\end{proof}
}
\rw{It is now possible to prove Theorem \ref{thm:converge_sequence_init} by employing 
Corollary 8 of \cite{tacchi2022convergence} because:
\begin{enumerate}    
    \item   All sets are \ac{BSA} and Archimedean (A7).
    \item All dynamics are polynomial (A8).
    \item All measures are bounded (Lemma \ref{lem:bounded_meas} and Cor. \ref{cor:bounded_risk}).
    \item The objective value is finite ($d^*_k(t_0, X_0) > 0$). $\qed$
    \end{enumerate}}
\bibliographystyle{IEEEtran}        
\bibliography{references}  

\end{document}